\documentclass[11pt, oneside]{amsart}
\usepackage{amsfonts, amstext, amsmath, amsthm, amscd, amssymb}
\usepackage{commath}

\usepackage{url}

\usepackage[paper=a4paper, text={138mm,208mm},centering]{geometry}
\usepackage[bookmarks]{hyperref}
\usepackage{float, placeins, booktabs, longtable}
\usepackage[capitalise]{cleveref}
\usepackage{enumitem}
\usepackage{tikz}
\usetikzlibrary{intersections,patterns,decorations.markings}
\usepackage{tikz-cd}
\crefname{conditionsi}{condition}{conditions}
\Crefname{conditionsi}{Condition}{Conditions}

\newcommand{\pfstep}[3]{
  \smallskip
  \hypertarget{#3}{\textbf{Step #1. #2.}}
}

\numberwithin{equation}{section}

\theoremstyle{plain}
\newtheorem{theorem}[equation]{Theorem}
\newtheorem{lemma}[equation]{Lemma}
\newtheorem{proposition}[equation]{Proposition}

\newtheorem{corollary}[equation]{Corollary}

\theoremstyle{definition}

\newtheorem{remark}[equation]{Remark}

\newtheorem{definition}[equation]{Definition}
\newtheorem{cond}[equation]{Condition}

\newtheorem*{ack}{Acknowledgments}

\DeclareMathOperator\Int{Int}
\DeclareMathOperator\ind{ind}

\newcommand{\R}{\mathbb R}
\newcommand{\xip}{\xi'}
\newcommand{\xil}{\xi_{\mathrm{lin}}}
\newcommand{\Xin}{X_{\mathrm{in}}}
\newcommand{\Xout}{X_{\mathrm{out}}}
\newcommand{\Xtan}{X_{\mathrm{tan}}}
\renewcommand{\setminus}{{\smallsetminus}}
\newcommand{\wt}[1]{\widetilde{#1}}
\newcommand{\ol}[1]{\overline{#1}}
\newcommand{\WW}{W}
\newcommand{\WWII}{\WW_{\mathrm{II}}}

\newcommand{\br}[1]{\left\lbrace #1 \right\rbrace}
\newcommand\restr[2]{{
  \left.\kern-\nulldelimiterspace 
  #1 
  \vphantom{\big|} 
  \right|_{#2} 
}}

\begin{document}

\title{Merging boundary critical points of a Morse function}

\author{Maciej Borodzik}
\address{Institute of Mathematics, Polish Academy of Sciences, ul. Sniadeckich 8, 00-656 Warsaw, Poland}
\email{mcboro@mimuw.edu.pl}

\author{Marcin Mielniczuk}
\address{Institute of Informatics, University of Warsaw, ul. Banacha 2,
  02-097 Warsaw, Poland}
\email{m.mielniczuk@uw.edu.pl}

\def\subjclassname{\textup{2010} Mathematics Subject Classification}
\expandafter\let\csname subjclassname@1991\endcsname=\subjclassname
\expandafter\let\csname subjclassname@2000\endcsname=\subjclassname
\subjclass{%
  57R25, 57R70,
  58K05.
}
\keywords{manifold with boundary, Morse theory, critical points}

\begin{abstract}
  In 2015, Borodzik, N\'emethi and Ranicki
  proved that an interior critical point can be pushed to the boundary, where it splits into two boundary critical points.
  In this paper, we show that two critical points at the boundary can be, under specific assumptions, merged into
  a single critical point in the interior. That is, we reverse the original construction.
\end{abstract}
\maketitle

\section{Introduction}
While the origins of the study of Morse functions for manifolds with boundary are in the seventies of the previous century,
a systematic research of this subject was originated only in the first decade of the twenty-first century
by Kronheimer and Mrowka \cite{KronheimerMrowka}.
Their analysis of the notions of boundary stable and unstable critical points and the Morse-Witten chain complex for
manifolds with boundary is put in the context of Floer theory.

In 2015, N\'emethi, Ranicki and the first author proved that an interior critical point of a Morse function can be pushed to the
boundary, where it splits into two boundary critical points; see \cite{morse-wboundary}.
A precise statement of the result is the following (we refer to
Section~\ref{sec:morse_intro} for terminology):
\begin{theorem}[see \expandafter{\cite[Theorem 3.1]{morse-wboundary}}]\label{thm:bnr}
  Suppose $M$ is a smooth manifold with boundary and $g\colon M\to\R$ is a Morse function.
  Suppose that $z\notin\partial M$ is a critical point of $g$ of index $k\neq 0,\dim M$. Suppose there is a path $\theta$ in the level
  set $g^{-1}(g(z))$ connecting $z$ and $\partial M$.

  In this situation, there exists a Morse function $f$ such that:
  \begin{enumerate}
    \item $f(x)=g(x)$ away from a neighborhood $U$ of $\theta$, where $U$ might be chosen to be as small as we please;
    \item $f(x)$ has precisely two critical points $p$ and $q$ in $U$. Both $p$ and $q$ belong to $\partial M$
          and have index $k$.
          The critical point $p$ is boundary stable, while the critical point $q$
          is boundary unstable;\label{thm:bnr:2crits}
    \item there is a gradient-like Morse--Smale vector field for $f$, such that there
          is precisely one trajectory $\gamma$ connecting $p$ to $q$.\label{thm:bnr:single-traj}
  \end{enumerate}
\end{theorem}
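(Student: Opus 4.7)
The strategy is to reduce the statement to an explicit local model supported in a tubular neighbourhood $U$ of $\theta$, so that we may take $f = g$ near $\partial U$ while the required configuration is produced inside $U$. The argument splits into three steps: normalizing $g$ on $U$ by an adapted chart, performing a surgery on the model, and analyzing the resulting gradient flow.

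The normalization starts from the Morse lemma at $z$: on a chart at $z$ we have $g = g(z) + Q_k$, where $Q_k$ is the standard index-$k$ quadratic form. Because $\theta$ is a smooth path in the null cone $\{Q_k = 0\}$ and the hypothesis $k \neq 0, n$ forces this cone to contain a smooth piece of positive dimension through $z$, we extend the Morse chart along $\theta$ by an ambient isotopy preserving the level $g = g(z)$, while simultaneously flattening $\partial M$ near $\theta(L)$ to a coordinate hyperplane. The output is adapted coordinates $(\tau, \xi_1, \ldots, \xi_{n-1})$ on $U$ in which $\theta$ is the $\tau$-axis for $\tau \in [0, L]$, $g$ has a standard quadratic form vanishing on the $\tau$-axis, and $\partial M \cap U = \{\xi_{n-1} = 0\}$ on the portion $\tau \in [L-\delta, L]$.

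The surgery is then a compactly supported perturbation of the shape $f = g + \beta(\tau) \cdot \Phi(\xi)$, where $\beta$ is a plateau function supported in the interior of $(0, L)$ and $\Phi$ is a cubic profile in $\xi_{n-1}$ (combined with a linear term) tuned so that $f|_{\xi_{n-1} = 0}$ has exactly two nondegenerate critical points $p, q$ with normal derivatives of opposite signs. A direct computation on the model shows that, for suitable coefficients, $f$ has no interior critical points in $U$, while $f|_{\partial M \cap U}$ has exactly two nondegenerate critical points of index $k$, one boundary stable and one boundary unstable.

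For part (\ref{thm:bnr:single-traj}) we take $X = -\nabla f$ with respect to a metric in which $\partial M$ is totally geodesic near $p$ and $q$. Since $p$ is boundary stable, its ambient stable manifold $W^s(p)$ lies in $\partial M$ and has dimension $n - 1 - k$; since $q$ is boundary unstable, its ambient unstable manifold $W^u(q)$ lies in $\partial M$ and has dimension $k$. These are complementary-dimensional submanifolds of $\partial M$, so their intersection is generically zero-dimensional, and the model reveals it to consist of a single point, which gives the unique trajectory $\gamma$. The principal obstacle is the first step: constructing the adapted chart requires a level-set-preserving isotopy extension along all of $\theta$ that is simultaneously compatible with the flattening of $\partial M$ at $\theta(L)$, and it is here that the hypothesis on $\theta$ and the exclusion $k \neq 0, n$ are fully used.
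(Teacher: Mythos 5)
This theorem is not proved in the present paper; it is quoted from Borodzik--N\'emethi--Ranicki \cite{morse-wboundary}, and the paper sets out to prove the converse, so there is no internal proof to compare against.

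Your sketch contains two genuine errors. First, in part~\eqref{thm:bnr:single-traj}, the stable/unstable containments are exactly backwards. Boundary stable means the boundary Morse normal form is $f(p)-x_1^2-\dots-x_{k-1}^2+x_{k}^2+\dots+x_{n-1}^2-y^2$, so $y$ is a descending direction and trajectories from the interior flow into $p$: it is $W^s(p)$ (of dimension $k$) that leaves $\partial M$, while $W^u(p)\subset\partial M$ has dimension $n-k$. Dually, for boundary unstable $q$, it is $W^s(q)\subset\partial M$ of dimension $k$, while $W^u(q)$ (of dimension $n-k$) escapes into the interior. The trajectory from $p$ to $q$ lies in $W^u(p)\cap W^s(q)$ (the flow increases $f$), two submanifolds of $\partial M$ of dimensions $n-k$ and $k$; a transverse intersection inside the $(n-1)$-dimensional $\partial M$ is therefore one-dimensional, i.e.\ a flow line. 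Your stated dimensions $n-1-k$ and $k$ yield a zero-dimensional intersection, which cannot contain a trajectory at all. Second, the surgery $f=g+\beta(\tau)\Phi(\xi)$ with $\beta$ supported in the interior of $(0,L)$ leaves $f\equiv g$ in a neighbourhood of $z=\theta(0)$, so $z$ survives as an interior critical point of $f$ inside $U$, contradicting the requirement that $f$ have precisely the two boundary critical points there. (Your chart is also internally inconsistent: if $\theta$ is the $\tau$-axis and $\partial M\cap U=\{\xi_{n-1}=0\}$ for $\tau\in[L-\delta,L]$, then $\theta(\tau)\in\partial M$ on that entire range, contradicting $\theta(\tau)\in\Int M$ for $\tau<L$.) The cubic germ is indeed the right local model for a birth of critical points at the boundary, but the perturbation must act along the whole of $\theta$ so as to transport $z$ to $\partial M$; a bump supported away from $z$ can neither move nor destroy it.
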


In the present paper, we prove the reverse result. Namely, under suitable circumstances, a pair of critical points on the boundary
can be merged into a single critical point in the interior. The precise statement follows. We refer to Figure~\ref{fig:main_thm}
for explanation.
\begin{theorem}\label{thm:main}
  Let $M$ be a manifold with boundary, $f\colon M\to\R$ be a Morse function,
  and let $\xi$ be a Morse--Smale gradient-like vector field for $f$.
  Suppose that $p, q \in \partial M$ are critical points of $f$, both of index $k$,
  where $p$ is boundary stable and $q$ is boundary unstable.
  Moreover, suppose there exists a single trajectory $\gamma$ of the vector field $\xi$
  starting at $p$ and terminating at $q$.

  Then for any neighborhood $U$ of $\gamma$, there exists a Morse function $g\colon M\to\R$,
  with the same critical points as $f$ away from $U$ and a single critical point of index $k$ inside $U$.
\end{theorem}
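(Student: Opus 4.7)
The plan is to construct the Morse function $g$ by modifying $f$ only inside a small tubular neighborhood $V\subset U$ of $\gamma\cup\{p,q\}$, directly realizing the reverse of the splitting construction of \cref{thm:bnr}.

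\textbf{Step 1 (Normal form).} Using the Morse lemma for manifolds with boundary near $p$ and $q$ (compare \cite{KronheimerMrowka}) together with a Moser-type straightening of the flow of $\xi$ along the interior arc of $\gamma$, I would choose coordinates $(u, t, y)$ on $V$, with $y\ge 0$ encoding the inward normal, in which
\[
  f \;=\; c + Q(u) + F(t,y).
\]
Here $u = (u_1, \ldots, u_{n-2})$, $Q$ is a nondegenerate quadratic form of index $k$, and $F(t,y)$ is a two-dimensional function on a ``longitudinal slice'' $\Sigma$ through $\gamma$, whose only boundary critical points are the BS--BU pair $(p,q)$ and whose unique trajectory in $\Sigma$ between them is $\gamma$. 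The vector field $\xi$ is expected to split as the sum of $-\nabla Q(u)$ and a two-dimensional gradient-like field for $F$ on $\Sigma$.

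\textbf{Step 2 (2D merge).} The task now reduces to replacing $F$ by a 2D function $G(t,y)$ on $\Sigma$ that agrees with $F$ along the outer edge of $\Sigma$, has no boundary critical points, and has a unique interior critical point of 2D-index $0$. A natural construction is via a 1-parameter family $F_\tau$, $\tau\in[0,1]$, with $F_0=F$ and $F_1=G$, passing through a single degenerate moment $\tau=\tau_\ast$ at which the two boundary critical points collide in a ``birth-death''-type event; for $\tau>\tau_\ast$ the boundary is free of critical points and a unique interior critical point emerges in the slice. This is the reverse of the 2D mechanism underlying \cref{thm:bnr}.

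\textbf{Step 3 (Gluing and vector field).} Define $g := c + Q(u) + G(t,y)$ on $V$ and $g := f$ on $M\setminus V$. Smoothness follows from the matching of $G$ with $F$ at the outer edge of $\Sigma$, and the critical points of $g$ outside $V$ coincide with those of $f$. A gradient-like vector field $\eta$ for $g$ is obtained by patching $\xi$ outside $V$ with the product of $-\nabla Q$ and the 2D gradient-like field for $G$ inside. The index of the new interior critical point is $k$ (contribution $k$ from $Q$ and $0$ from $G$), and the Morse--Smale and single-trajectory properties at it are automatic from the explicit form of $G$.

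\textbf{Main obstacle.} I expect the main technical difficulty to be the 2D model construction in Step 2 together with the normal form matching in Step 1. The slice $\Sigma$ intersects $\partial M$ in two disjoint arcs through $p$ and $q$, so the longitudinal domain has a ``dumbbell''-shaped boundary rather than a rectangular one; this complicates both the separation of variables and the design of $G$. One must verify that the merging produces exactly one interior critical point (not zero, not more) and introduces no unwanted boundary critical points of $g|_{\partial M}$ in $V$. A further subtlety is that the along-$\gamma$ direction coincides with the inward normal to $\partial M$ at $p$ and $q$ but differs from it in the interior of $\gamma$, so the product decomposition in Step 1 must interpolate between two distinct two-plane fields along $\gamma$.
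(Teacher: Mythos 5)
Your overall strategy---modify $f$ directly in a tubular slab around $\gamma$ via a 2D model---differs in a fundamental way from the paper's approach, which works entirely at the level of the vector field: it normalizes $\xi$ along $\gamma$ (Proposition~\ref{prop:coordinates}, Lemma~\ref{lem:u2}), perturbs it to $\xi_c = \xi - c\,\eta\,\pd{}{x}$ so that the boundary critical points disappear and exactly one hyperbolic interior zero $z$ appears (Proposition~\ref{txi-crits}, Lemma~\ref{lem:eigenvalues}), linearizes near $z$ (Proposition~\ref{assure-normalform}), proves a global trajectory lemma (Lemma~\ref{lem:key_lemma}), and only at the very end recovers a Morse function by integrating the rescaled flow (Proposition~\ref{prop:integrate}). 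Your route could be made to work in principle, but as written it has several genuine gaps.

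First, your geometric picture of $\gamma$ is wrong, and this undercuts the normal form in Step~1. Since $p$ is boundary stable, $W^u(p)\subset\partial M$, and since $q$ is boundary unstable, $W^s(q)\subset\partial M$; hence $\gamma\subset W^u(p)\cap W^s(q)\subset\partial M$. The slice $\Sigma$ therefore meets $\partial M$ in a \emph{single} arc containing $\gamma$, not in ``two disjoint arcs through $p$ and $q$,'' and the longitudinal domain is a rectangular half-strip $\{y\ge 0\}$, not a dumbbell. Likewise the along-$\gamma$ direction is tangent to $\partial M$ everywhere, never the inward normal. The subtlety you flag as your ``main obstacle'' is not the actual obstacle.

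Second, the exact-splitting normal form $f = c + Q(u) + F(t,y)$ is stronger than what is available. The paper's Proposition~\ref{prop:coordinates} normalizes only the \emph{vector field} $\xi$ (and Lemma~\ref{lem:u2} refines it further); $f$ itself retains cross terms along $\gamma$. Getting a genuine separation of variables for $f$ would require an additional Moser-type argument which you have not supplied; and without it, Step~3's gluing ``$g := c + Q(u) + G(t,y)$ on $V$, $g := f$ on $M\setminus V$'' does not produce a smooth function.

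Third, Step~2 is the technical heart of the construction and is asserted rather than carried out. Note that $p$ and $q$ have the same full index $k$, so the collision is not a standard birth--death in $M$; what cancels is the pair of boundary critical points of $f|_{\partial M}$ (indices $k-1$ and $k$), and the non-trivial point is to show that exactly one interior critical point of the correct type appears, with a gradient-like vector field in Morse normal form near it and with the required Morse--Smale behavior. The paper handles precisely these points in Proposition~\ref{txi-crits}, Lemma~\ref{lem:eigenvalues}, Proposition~\ref{assure-normalform}, and Lemma~\ref{lem:key_lemma}; your claim that they are ``automatic from the explicit form of $G$'' skips over exactly the content of those statements. In particular, the trajectory-dichotomy result (every trajectory entering the neighborhood either converges to $z$ or leaves, Lemma~\ref{lem:key_lemma}) is essential for the function $g$ to be well-defined and Morse, and nothing in your proposal addresses it.
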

\begin{figure}
  \begin{tikzpicture}[decoration={markings,mark=at position 0.5 with {\arrow{>}}}]
    \begin{scope}[xshift=-3.5cm]
      \fill[yellow!05] (0,1.5) rectangle (2,-1.5);
      \draw[thick,postaction={decorate}] (0,-0.6) -- (0,0.6);
      \fill[black] (0,-0.6) circle (0.08) node [left] {$p$};
      \fill[black] (0,0.6) circle (0.08) node [left] {$q$};
      \draw[thick,postaction={decorate}] (0,-0.6) -- (0,-1.5);
      \draw[thick,postaction={decorate}] (0,1.5) -- (0,0.6);
      \draw[postaction={decorate}] (0,0.6) .. controls ++ (0.3,0) and ++ (0,-0.3) .. (1.5,1.5);
      \draw[postaction={decorate}] (1.5,-1.5) .. controls ++ (0,0.3) and ++ (0.3,0) .. (0,-0.6);
      \draw[postaction={decorate}] (1.25,-1.5) .. controls ++ (-0.4,0.6) and ++ (0.05,0.8) .. (0.25,-1.5);
      \draw[postaction={decorate}] (0.25,1.5) .. controls ++ (0.05,-0.8) and ++(-0.4,-0.6) .. (1.25,1.5);
      \draw[postaction={decorate}] (1.75,-1.5) .. controls ++ (0,0.5) and ++ (0,-0.6) .. (0.2,0);
      \draw[postaction={decorate}] (0.2,0) .. controls ++(0,0.6) and ++(0,-0.5) .. (1.75,1.5);
    \end{scope}
    \begin{scope}[xshift=3cm]
      \fill[yellow!05] (0,1.5) rectangle (2,-1.5);
      \draw[thick,postaction={decorate}] (0,1.5) -- (0,-1.5);
      \draw[postaction={decorate}] (0.5,1.5) -- (1,0);
      \draw[postaction={decorate}] (1.5,-1.5) -- (1,0);
      \draw[postaction={decorate}] (1,0) -- (0.5,-1.5);
      \draw[postaction={decorate}] (1,0) -- (1.5,1.5);
      \draw[postaction={decorate}] (0.25,1.5) .. controls ++ (0.2,-0.9) and ++(0.2,0.9) .. (0.25,-1.5);
      \draw[postaction={decorate}] (1.75,-1.5) .. controls ++ (-0.2,0.9) and ++(-0.2,-0.9) .. (1.75,1.5);
      \draw[postaction={decorate}] (0.75,1.5) .. controls ++ (0.1,-0.9) and ++(-0.1,-0.9) .. (1.25,1.5);
      \draw[postaction={decorate}] (1.25,-1.5) .. controls ++ (-0.1,0.9) and ++(0.1,0.9) .. (0.75,-1.5);
      \fill[black] (1,0) circle (0.08) node [right] {$z$};
    \end{scope}
    \draw[dashed,->] (-1.3,0.3) .. controls ++(0.5,0.5) and ++(-0.5,0.5) .. node [above] {\cref{thm:main}} (2.5,0.3);
    \draw[dotted,->] (2.5,-0.3) .. controls ++(-0.5,-0.5) and ++(0.5,-0.5) .. node [below] {\cref{thm:bnr}} (-1.3,-0.3);

  \end{tikzpicture}
  \caption{Theorems~\ref{thm:bnr} and~\ref{thm:main} in dimension 2.}\label{fig:main_thm}
\end{figure}

Theorem~\ref{thm:main} deals with the only remaining case of possible cancellations of boundary critical points as explained
in the following corollary.
\begin{corollary}
  Suppose $p$ and $q$ are boundary critical points of $f$ such that the indices of $p$ and $q$ computed on $\partial M$
  differ by $1$ (with $f(p)<f(q)$ and $\ind p < \ind q$). Suppose $\xi$ is a gradient-like Morse--Smale vector field for $f$,
  and such that there exists a single trajectory of $\xi$ from $p$ to $q$. Then:
  \begin{itemize}
    \item if $p$ and $q$ are both boundary stable (resp. boundary unstable), then they can be canceled (as in \cite[Theorem 5.1]{morse-wboundary});
    \item if $p$ is boundary stable and $q$ is boundary unstable, then the pair $p$ and $q$ can be merged and pushed into
          the interior as in Theorem~\ref{thm:main};
    \item if $p$ is boundary unstable and $q$ is boundary stable, then $\ind q=\ind p+2$, see the table in \cite[Section 4.3]{morse-wboundary}.
          This means that $p$ and $q$ are usually connected by a 1-parameter family of trajectories. No simplification is possible.
  \end{itemize}
\end{corollary}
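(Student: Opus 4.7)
The plan is to reduce the problem to a standard local model on a neighborhood of $\gamma \cup \{p,q\}$ and then invert the construction of \cref{thm:bnr} explicitly in that model.

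\textbf{Step 1 (Normalization near $p$ and $q$).} First I apply the half-space Morse lemma at $p$ and at $q$ to obtain coordinate charts in which $\partial M$ is a coordinate hyperplane, $f$ takes the standard quadratic normal form for a boundary-stable (respectively boundary-unstable) critical point of index $k$, and $\xi$ coincides with the standard (pseudo-)gradient of $f$ for these normal forms. Away from $\gamma$ this is essentially the content of the Morse lemma for manifolds with boundary.

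\textbf{Step 2 (Normalization along $\gamma$).} Next I extend these charts across $\gamma$. Since $p$ is boundary stable and $q$ is boundary unstable, the trajectory $\gamma$ lies entirely in $\partial M$; by the Morse--Smale condition together with the uniqueness of $\gamma$, the unstable manifold of $p$ and the stable manifold of $q$ meet transversely inside $\partial M$ precisely along $\gamma$. Using a flow-box of $\xi$ along $\gamma$ together with this transversality, the two Morse charts can be glued into a single diffeomorphism $\Phi\colon V\to W\subset\R^n_{\geq 0}$, where $V\subset U$ is a neighborhood of $\gamma\cup\{p,q\}$ in $M$, carrying the triple $(f,\xi,\partial M|_V)$ to a fixed model triple $(f_0,\xi_0,\{x_n=0\})$. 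The conclusion I need is that any configuration satisfying the hypotheses of \cref{thm:main} is locally equivalent, in a boundary-preserving way, to the standard output of the splitting construction of \cref{thm:bnr}.

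\textbf{Step 3 (Explicit inversion and patching).} On the standard model I write down a Morse function $g_0$ which has a single interior critical point $z$ of index $k$ and which agrees with $f_0$ outside a compact set arbitrarily close to $\gamma$; indeed such a $g_0$ is obtained by the pointwise reverse of the bump-function modification used in \cite{morse-wboundary}, since that construction is explicit. By construction, applying \cref{thm:bnr} to $(g_0,z)$ along a suitable path $\theta$ reproduces $f_0$, which justifies calling this step an inversion. I then set $g:=f_0\circ\Phi$ on $V$, replaced by $g_0\circ\Phi$ in a smaller neighborhood, and $g:=f$ elsewhere; shrinking the support guarantees $g\equiv f$ outside $U$, while the properties of $g_0$ guarantee conditions \eqref{thm:bnr:2crits} and \eqref{thm:bnr:single-traj} of \cref{thm:bnr} for the pair $(g,f)$.

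\textbf{Expected main obstacle.} The genuinely delicate step is the global normalization in Step 2. The Morse lemma handles the endpoints $p$ and $q$, and along the interior of $\gamma$ the flow of $\xi$ can be straightened into a translation, but verifying that the two endpoint normal forms glue smoothly along $\gamma$ requires a careful half-space analogue of the Palis--Smale linearization along a transverse connecting orbit. The Morse--Smale condition and the uniqueness of $\gamma$ are exactly what give the needed transversality of the invariant manifolds inside $\partial M$, and hence the essential uniqueness of the local model; once this is established, Step 3 is routine.
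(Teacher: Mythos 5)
Your proposal does not actually prove the statement in question, which is the \emph{corollary}, not Theorem~\ref{thm:main}. The corollary is a three-way classification by boundary stability whose justification is a short case analysis combined with citations: the first bullet is the boundary cancellation theorem of \cite{morse-wboundary}, the third bullet is an index computation (boundary index $k_q=k_p+1$ together with $p$ boundary unstable and $q$ boundary stable forces $\ind q = k_q+1 = k_p+2 = \ind p + 2$) followed by the observation that the Morse--Smale condition then generically produces a one-parameter family of connecting orbits, and the second bullet is precisely Theorem~\ref{thm:main}, which is quoted rather than re-derived. Your proposal ignores the first and third bullets entirely, and for the second bullet you set out to re-prove Theorem~\ref{thm:main} from scratch instead of invoking it.

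Even read as a sketch of Theorem~\ref{thm:main}, Step~2 claims too much. You want a diffeomorphism $\Phi$ carrying the triple $(f,\xi,\partial M)$ near $\gamma\cup\{p,q\}$ to a fixed model triple $(f_0,\xi_0,\{x_n=0\})$; this is a smooth normal form for the vector field along a heteroclinic orbit with degenerate endpoints, and such a normal form is obstructed in the smooth category. Indeed, the paper explicitly retreats from this: just before Proposition~\ref{assure-normalform} it notes that Grobman--Hartman gives only H\"older conjugacy at the new interior zero $z$, which is why $\xi_c$ is \emph{replaced} by its linear part near $z$ rather than smoothly linearized. The normalization the paper actually achieves (Proposition~\ref{prop:coordinates} and Lemma~\ref{lem:u2}) only pins down the \emph{form} of $\xi$ and the signs of the coefficient functions $v,w$, not their values; the heavy lifting is then done by perturbing $\xi$ with a bump function, analyzing the phase portrait of the perturbed field (Lemma~\ref{lem:key_lemma}), and integrating the result into a Morse function (Proposition~\ref{prop:integrate}). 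Your phrase ``pointwise reverse of the bump-function modification'' hides exactly these steps: the explicitness of the construction in \cite{morse-wboundary} does not by itself certify that a given $(f,\xi)$ lies in its image, nor does it supply the trajectory analysis and vector-field integration needed to conclude.
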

We remark that in the last case on the bullet list above,
the non-cancellation theorem \cite[Lemma 5.7]{morse-wboundary} can be applied as well.

The aim of the paper is to prove Theorem~\ref{thm:main}. The idea is
to construct a suitable coordinate system in $U$, which might require replacing $\xi$
with another gradient-like vector field $\wt{\xi}$ for $f$. Then, we alter $\wt{\xi}$ to
obtain another vector field $\xip$
on $M$ and show that we can construct a Morse function $g$ for which $\xip$ is gradient-like.

The actual plan of the paper is as follows.
In Section~\ref{sec:morse_intro}, we introduce the necessary terminology
and recall the relevant results from \cite{morse-wboundary}.
In Section~\ref{sec:coordinate}, we construct the suitable coordinate system. Finally, in Section~\ref{sec:main_proof},
we give the proof of Theorem~\ref{thm:main}.

\begin{ack}
  The paper is a part of the Master's thesis of MM under the supervision of MB.
  A part of the project was done while MB was visiting the R\'enyi Institute, whose hospitality he is grateful for.
  Both authors were supported by the NCN OPUS Grant 2019/B/35/ST1/01120.
\end{ack}
\section{Morse theory for manifolds with boundary}\label{sec:morse_intro}

Recall that a smooth function $f\colon M\to\R$, where $M$ is a closed smooth manifold, is called \emph{Morse}, if any
critical point $x$ of $f$ is non-degenerate, that is to say, the matrix of the second derivatives $D^2f(x)$ is non-degenerate,
which turns out not to depend on the choice of the coordinate system around $x$.

This definition can be generalized to the case of manifolds with boundary; see \cite[Section 2.4]{KronheimerMrowka} and \cite[Section 1]{morse-wboundary}.
\begin{definition}[Morse function]\label{def:morsefn}
  Let $M$ be a manifold with boundary, and let $f\colon M \to \R$ be smooth.
  We say that $f$ is \emph{Morse} if:
  \begin{enumerate}
    \item all critical points of $f$ are non-degenerate;
    \item $f|_{\partial M}$ is Morse in the usual sense;
    \item all critical points of $f|_{\partial M}$ are actually critical points of $f$. \label{no-bad-crits}
  \end{enumerate}
\end{definition}

This definition differs from the ones given in \cite{morse-wboundary,KronheimerMrowka} and \cite{BorodzikPowell}.
However, condition \ref{no-bad-crits} of \cref{def:morsefn} is equivalent to saying that $T_p\partial M \subset \ker df$ implies $dF = 0$,
which is the contrapositive of condition (2.4) of \cite{BorodzikPowell}, and we can repeat their argument.

\begin{theorem}[Boundary Morse lemma, see \expandafter{\cite[Lemma 2.6]{morse-wboundary}}]\label{lem:boundary_morse}
  Let $p \in \partial M$ be a non-degenerate critical point of a Morse function $f\colon M \to \R$.
  Then there exist an integer $k=0,\dots,n-1$, $\epsilon=\pm1$, and local coordinates $y,x_1, \dots, x_{n-1}$, defined in an open neighborhood $U \ni p$, such that
  \begin{enumerate}
    \item $p=(0,\dots,0)$;
    \item $y\ge 0$ on $U$;
    \item $y=0$ defines $\partial M\cap U$;
    \item the following equality holds:
          \begin{equation}\label{eq:morse}
            f(x_1,\dots,x_{n-1},y) = f(p) - x_1^2 - \dots - x_k^2 + x_{k+1}^2 + \dots + x_{n-1}^2+\epsilon y^2.
          \end{equation}
  \end{enumerate}
\end{theorem}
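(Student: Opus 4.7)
The plan is to reduce to the classical interior Morse lemma by first normalizing the coordinates along $\partial M$ and then absorbing the transverse contribution of $f$. Using a collar neighborhood of $\partial M$ near $p$, I would pick initial local coordinates $(x_1, \dots, x_{n-1}, y)$ with $p = 0$, $y \geq 0$ and $\partial M \cap U = \{y = 0\}$. Since $f|_{\partial M}$ is Morse and $p$ is one of its non-degenerate critical points, the classical Morse lemma on $\partial M$ adjusts the $x$-coordinates so that
\[
f(x, 0) = f(p) + Q(x), \qquad Q(x) := -x_1^2 - \dots - x_k^2 + x_{k+1}^2 + \dots + x_{n-1}^2,
\]
and we extend these $x$-coordinates into the collar by keeping $y$ fixed.

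Applying Hadamard's lemma twice in the $y$-direction then yields smooth functions $h$ and $S$ with
\[
f(x, y) = f(p) + Q(x) + y\, h(x) + y^2\, S(x, y).
\]
The critical-point condition forces $h(0) = \partial_y f(p) = 0$, so Hadamard in the $x$-variables gives $h(x) = \sum_{i=1}^{n-1} x_i\, h_i(x)$ for smooth $h_i$. Writing $\epsilon_i \in \{\pm 1\}$ for the sign of $x_i^2$ in $Q$, the substitution
\[
x_i' := x_i + \frac{y\, h_i(x)}{2\epsilon_i}
\]
has Jacobian equal to the identity at $p$, restricts to the identity on $\{y = 0\}$, and is thus a local diffeomorphism preserving the boundary. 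Completing the square gives $Q(x') = Q(x) + y\, h(x) + y^2\, C(x)$ with $C(x) = \sum h_i(x)^2/(4\epsilon_i)$, so in the new coordinates
\[
f = f(p) + Q(x') + y^2\, \tilde{S}(x', y)
\]
for some smooth $\tilde{S}$.

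Finally, the non-degeneracy of the full Hessian $D^2 f(p)$, combined with the diagonal boundary block, forces $\tilde{S}(0, 0) \neq 0$: a Schur-complement computation gives $\det D^2 f(p) = \det(2 D^2 Q) \cdot 2\tilde{S}(0, 0)$. Setting $\epsilon := \operatorname{sign} \tilde{S}(0, 0)$, the function $\epsilon\, \tilde{S}$ is positive in a neighborhood of $p$, so
\[
y' := y\, \sqrt{\epsilon\, \tilde{S}(x', y)}
\]
is a smooth change of coordinate preserving both $\{y = 0\}$ and $\{y \geq 0\}$, with $\partial y'/\partial y \neq 0$ at $p$. In the final coordinates $(x_1', \dots, x_{n-1}', y')$ one reads off $f = f(p) + Q(x') + \epsilon (y')^2$, as required.

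The main obstacle is keeping the half-space structure $y \geq 0$ intact while eliminating the off-diagonal and transverse Hessian terms. Both coordinate changes above are engineered to fix $\{y = 0\}$ pointwise — the first because it acts as the identity there, the second because $y'$ and $y$ differ by a positive smooth factor — and each has non-degenerate Jacobian at $p$, so the inverse function theorem delivers the local diffeomorphisms; the remaining work is the bookkeeping of Taylor remainders.
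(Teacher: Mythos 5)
The paper states this lemma with a citation to the Borodzik--N\'emethi--Ranicki paper and gives no proof of its own, so there is nothing internal to compare against. Your argument is correct and follows the standard direct method: classical Morse lemma on $\partial M$ in collar coordinates, Hadamard expansion in the normal variable $y$, a square-completion in $x$ to absorb the $y\,h(x)$ cross term, and finally a positive rescaling of $y$ once non-degeneracy of $D^2f(p)$ forces $\tilde S(0,0)\ne 0$. Every coordinate change you use has the form $(x,y)\mapsto(\Phi(x,y),\rho(x,y)\,y)$ with $\rho>0$, which is precisely what preserves both $\{y=0\}$ and $\{y\ge 0\}$; this is the essential point for the boundary version of the lemma, and you handle it correctly.

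Two small inaccuracies, neither of which affects the argument. First, the Jacobian of $(x,y)\mapsto(x',y)$ at $p$ is not the identity: one has $\partial x_i'/\partial y\big|_p = h_i(0)/(2\epsilon_i)$, and $h_i(0)=\partial_{x_i}\partial_y f(p)$ is generally nonzero (those entries are exactly what kills the mixed Hessian terms). The Jacobian is unit upper-triangular, hence still invertible, and the map still fixes $\{y=0\}$ pointwise, which is all you need. Second, the determinant identity is off by a constant factor; in the $(x',y)$ coordinates the Hessian at $p$ is in fact block-diagonal, equal to $\operatorname{diag}(2\epsilon_1,\dots,2\epsilon_{n-1},2\tilde S(0,0))$, which is the cleanest way to see $\tilde S(0,0)\ne 0$ without invoking a Schur complement at all.
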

The next definition comes from \cite{KronheimerMrowka}, we refer to \cite[Section 2.4]{morse-wboundary} for a detailed
discussion.
\begin{definition}[Boundary stable and unstable critical points]
  Let $p$ be a boundary critical point of a Morse function $f$. We say that $p$ is \emph{boundary stable} (resp.  \emph{boundary unstable}) if
  $\epsilon=-1$ (resp. $\epsilon=+1$), where $\epsilon$ is defined as in \eqref{eq:morse}.
\end{definition}
In short, being a boundary stable critical point means that
the flow of $\nabla f$ attracts toward the boundary in the vicinity of the critical point,
while in the case of a boundary unstable critical point,
the flow of $f$ repels from the boundary.

We now define the index of a boundary critical point.
\begin{definition}[Index of a critical point]
  Let $p$ be a boundary critical point. The \emph{index} of a critical point is the dimension of the negative definite subspace
  of $D^2f(p)$. Put differently, for a boundary stable critical point, the index is $k+1$, while for the boundary unstable critical
  point, the index is $k$. Here, $k$ is defined as in \cref{lem:boundary_morse}.
\end{definition}

We now recall the definition of a gradient like vector field.
\begin{definition}[Gradient-like vector field]\label{def:gl-vf}
  Let $f$ be a Morse function on a manifold with boundary $M$.
  Let $\xi$ be a vector field on $M$. We shall say that $\xi$ is gradient-like with respect to $f$, if
  \begin{enumerate}
    \item $\xi$ is tangent to $\partial M$ at the boundary;
    \item $\partial_\xi f > 0$ away from the critical points of $f$;
    \item for any critical point $p$ of $f$, there exist local coordinates $x_1, \dots, x_n$ around $p$, such that $f$ and $\xi$ admit
          the following form (called the Morse normal form):
          \begin{equation}\label{eq:morse_normal}
            \begin{split}
              f(\bar x)   =& f(p) - x_1^2 - \dots - x_k^2 + x_{k+1}^2 + \dots + x_n^2 \\
              \xi(\bar x) =& (-x_1, \dots, -x_k, x_{k+1}, \dots, x_n)
            \end{split}
          \end{equation}
  \end{enumerate}
\end{definition}
The model situation for $\xi$ is that $\xi=\nabla f$, for a suitably chosen Riemannian metric on $M$.
In \cite[Section 1.1]{morse-wboundary}, it is proved that
each Morse function $f$ admits a gradient-like vector field. We now recall the Morse--Smale condition; see \cite[Section 4.3]{morse-wboundary}.
\begin{definition}[Morse-Smale vector field]
  A gradient-like vector field $\xi$ for $f$ is called \emph{Morse-Smale}, if for any two critical points $p$ and $q$,
  with $f(p)<f(q)$:
  \begin{itemize}
    \item the submanifolds of $\partial M$: $W^u(p)\cap \partial M$ and $W^s(q)\cap\partial M$ intersect transversely,
    \item the submanifolds of $\Int M$: $W^u(p)\cap \Int M$ and $W^s(q)\cap\Int M$ intersect transversely.
  \end{itemize}
  Here $W^s$ and $W^u$ are, respectively, the stable and the unstable manifolds of a critical point of $\xi$.
\end{definition}
We note that the Morse--Smale condition is open-dense among all gradient-like vector fields; see \cite[Section 4.3]{morse-wboundary}.

\section{Coordinate neighborhood}\label{sec:coordinate}

Throughout \cref{sec:coordinate}, we let $f$ be a fixed Morse function
satisfying the assumption of Theorem~\ref{thm:main}. All gradient-like vector fields are
assumed to be gradient-like with respect to the function $f$.

The proof of the following result is completely analogous to the proof of \cite[Proposition 5.2]{morse-wboundary},
with the only difference being that the local behavior of the first coordinate is different.
\begin{proposition}\label{prop:coordinates}
  There exists an open neighborhood $U_1$ of $\gamma$,
  a coordinate map $\varphi\colon U_1\to \R_{\ge 0}\times\R^{n-1}$ (with coordinates denoted by $(y,x_1,\dots,x_{n-1})$)
  and a gradient-like vector field $\xi_1$ for $f$
  agreeing with $\xi$ away from $U_1$, such that:
  \begin{itemize}
    \item $\varphi$ takes $U_1\cap \partial M$ to $\{0\}\times\R^{n-1}$;
    \item $\varphi(p)=(0,0,\dots,0)$;
    \item $\varphi(q)=(0,1,0,\dots,0)$;
    \item the curve $\gamma$ is mapped to the segment $(0,t,0,\dots,0)$, $t\in[0,1]$, connecting $p$ with $q$;
    \item the map $\varphi$ takes $\xi_1$ to a vector field on $\varphi(U_1)\subset \R_{\ge 0}\times\R^{n-1}$
          given in the form
          \begin{equation}\label{eq:form_of_xi_prim}
            (yv(y,x_1,\dots),w(x_1),-x_2,\dots,-x_{k-1},x_{k},\dots,x_{n-1})
          \end{equation}
          for some smooth functions $v, w$ with the properties listed below;
    \item the function $w$ is positive for $x_1 \in (0,1)$ and negative for $x_1 \notin [0,1]$;
    \item the function $v$ is negative at $x_1 = 0$ and positive at $x_1 = 1$.
  \end{itemize}
\end{proposition}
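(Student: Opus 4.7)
I would follow the strategy of \cite[Proposition 5.2]{morse-wboundary} almost verbatim, the only substantive difference being the local behavior of the coordinate $x_1$ along $\gamma$. The overall idea is to first set up Morse normal forms for $f$ and $\xi$ near $p$ and $q$ with the first tangential coordinate aligned with $\gamma$, and then use the flow of a modified vector field $\xi_1$ to propagate these coordinates along $\gamma$ into a single chart $\varphi$ on an open neighborhood $U_1$.

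As a first step, I would apply the Boundary Morse Lemma \cref{lem:boundary_morse} together with condition (3) of \cref{def:gl-vf} at $p$ and $q$ to obtain, near each critical point, local coordinates in which $f$ is quadratic and $\xi$ has linear components $\pm x_i$. The Morse normal form retains a freedom of orthogonal transformation within the tangential stable and unstable subspaces; I would exploit this freedom to make the $x_1$-axis tangent to $\gamma$ at each endpoint. This is possible because $p$ is boundary stable (so $W^u(p)\subset \partial M$ near $p$) and $\gamma$ leaves $p$ along a specific one-dimensional subspace of $W^u(p)\cap T_p(\partial M)$; analogously at $q$, using $W^s(q)\cap T_q(\partial M)$.

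Next, I would extend these local charts to a chart on a full neighborhood of $\gamma$ by flowing a transverse slice at $p$ forward along $\xi$, rescaling the flow parameter so that it covers $[0,1]$ with $x_1(p)=0$ and $x_1(q)=1$. The Morse--Smale hypothesis and the uniqueness of $\gamma$ guarantee that the interior of $\gamma$ is a regular flow segment, and one matches the propagated slice near $q$ to the Morse-lemma slice there. Choosing the transverse slices as products (the boundary-normal $y$-direction times a Morse slice in the tangential directions) makes the stable/unstable splitting $(x_2,\dots,x_{k-1},x_k,\dots,x_{n-1})$ consistent throughout $U_1$. A small subsequent modification of $\xi$ on $U_1$, supported away from the Morse normal form regions around $p$ and $q$, then brings it into the stated form: the $y$-component vanishes on $\partial M$ and factors as $yv$; the components transverse to $\gamma$ are straightened to $(-x_2,\dots,-x_{k-1},x_k,\dots,x_{n-1})$; and the $x_1$-component depends only on $x_1$ because $\gamma$ is a flow trajectory and the transverse slices are preserved by the construction.

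The sign properties of $w$ and $v$ are then read off from the endpoint data. Since $\xi_1$ flows from $p$ to $q$ along $\gamma$, $w$ vanishes at $0$ and $1$ and is positive on $(0,1)$; it is negative just outside $[0,1]$ because $x_1$ is an unstable direction at $p$ and a stable direction at $q$. The signs of $v$ at $x_1=0$ and $x_1=1$ are opposite, determined by the parameter $\epsilon$ in the Boundary Morse Lemma, which has opposite values at the boundary stable $p$ and the boundary unstable $q$. I expect the main obstacle to be the smooth gluing step: combining the Morse normal form coordinates near $p$ and $q$ with the flow-transported coordinates along the interior of $\gamma$ into one smooth chart, and simultaneously patching the various modifications of $\xi$ so that $\xi_1$ remains a gradient-like vector field agreeing with $\xi$ outside $U_1$. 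This is precisely the technical core carried over from \cite[Proposition 5.2]{morse-wboundary}; the only genuine deviation is that here both endpoints of $\gamma$ lie in $\partial M$, so the function $w(x_1)$ must vanish at two points rather than just at a single interior endpoint as in the original proposition.
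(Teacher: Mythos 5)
Your proposal is correct and follows the same approach as the paper, which simply cites \cite[Proposition 5.2]{morse-wboundary} and notes that the only difference is the local behavior of the first coordinate. Your elaboration --- Morse normal forms at $p$ and $q$ aligned so that $x_1$ is tangent to $\gamma$, flow-propagation of the chart, gluing, and the observation that the genuine deviation is that both endpoints of $\gamma$ are boundary critical points so that $w$ must vanish at both $x_1=0$ and $x_1=1$ --- accurately captures what that adaptation involves.
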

\begin{remark}
  The convention in \eqref{eq:form_of_xi_prim} is that the first coordinate of $\xi_1$ is the $\frac{\partial}{\partial y}$-coordinate,
  while the next coordinates are directions of $\frac{\partial}{\partial x_1},\dots,\frac{\partial}{\partial x_{n-1}}$.
\end{remark}
\begin{remark}
  Just as in the proof of \cite[Proposition 5.2]{morse-wboundary}, the proof of the \lcnamecref{prop:coordinates} above crucially requires
  $\xi$ to be a Morse--Smale vector field.
  This is the only place in the paper where we are using the Morse--Smale assumption.
\end{remark}

From now on, we assume that such $U_1$ and $\varphi$ have been chosen. We will now improve $\xi_1$ so that it still
has the form \eqref{eq:form_of_xi_prim}, but the function
$v$ is better behaved.
\begin{lemma}\label{lem:u2}
  There exists a smaller neighborhood $U_2\subset U_1$ of $\gamma$ and a gradient-like vector field $\xi_2$, agreeing with $\xi_1$
  away from $U_1$, such that, on $U_2$ $D\varphi(\xi_2)$ is given by \eqref{eq:form_of_xi_prim}, but
  \begin{equation}
    v(y,x_1,\dots,x_{n-1})=2x_1-1.
  \end{equation}
\end{lemma}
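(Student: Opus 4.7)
The plan is to modify only the $y$-component of $\xi_1$ on a tubular neighborhood of $\gamma$ via a smooth interpolation, leaving the remaining coordinates (and the function $f$ itself) untouched. Choose a cutoff $\rho\colon U_1\to[0,1]$ depending only on the transverse coordinates $(y, x_2,\dots,x_{n-1})$, with $\rho\equiv 1$ on a thinner tubular neighborhood $U_2\subset U_1$ of $\gamma$ and $\rho\equiv 0$ outside a slightly larger neighborhood still contained in $U_1$. Set
\[
  \widetilde v := (1-\rho)\,v + \rho\,(2x_1-1),
\]
and let $\xi_2$ be the vector field with
\[
  D\varphi(\xi_2) = \bigl(y\widetilde v,\ w(x_1),\ -x_2,\dots,-x_{k-1},x_k,\dots,x_{n-1}\bigr)
\]
on $U_1$, extended by $\xi_1$ outside $U_1$. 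By construction $\xi_2$ coincides with $\xi_1$ off the support of $\rho$, and on $U_2$ one has $\widetilde v = 2x_1-1$, as required.

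Next, I would verify the three conditions of \cref{def:gl-vf} for $\xi_2$. Tangency to $\partial M$ is immediate since the $y$-component is a multiple of $y$. The zero set of $\xi_2$ in $U_1$ is unchanged: $w(x_1)=0$ alone forces $x_1\in\{0,1\}$, and at such a point the remaining components force $y$ and all $x_i$ ($i\ge 2$) to vanish, regardless of the value of $\widetilde v$. On $U_2$ the vector field takes an explicit form whose linearization at $p$ is block diagonal with $y$-eigenvalue $\widetilde v(p)=-1$, $x_1$-eigenvalue $w'(0)>0$, and the prescribed $\pm 1$-eigenvalues on the remaining coordinates, so the signature matches the Morse data of $f$ at $p$. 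A standard simultaneous-linearization argument for a gradient-like vector field (a linear rescaling followed by an application of the Morse lemma) then produces local coordinates in which both $f$ and $\xi_2$ are in Morse normal form. The analysis at $q$ is identical, with the signs of the $y$- and $x_1$-eigenvalues flipped.

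The main obstacle is the positivity condition $\partial_{\xi_2} f > 0$ on $U_1\setminus\{p,q\}$. Since only the $y$-component was altered,
\[
  \partial_{\xi_2}f - \partial_{\xi_1}f = y\,\rho\,(2x_1-1-v)\,\partial_y f,
\]
so the discrepancy vanishes on $\gamma$ (where $y=0$) and is of order $y$ nearby. Away from small neighborhoods of $p$ and $q$, the function $\partial_{\xi_1}f$ admits a uniform positive lower bound on the compact region where $\rho\in(0,1]$, so shrinking the transverse radius of the support of $\rho$ makes the correction negligible. Near the critical points, where $\partial_{\xi_1} f$ itself tends to zero, one invokes the Morse expansion of $f$ in the coordinates of \cref{prop:coordinates}: the leading quadratic part of $\partial_{\xi_1}f$ dominates the $O\bigl(y\,|\partial_y f|\bigr)$ correction provided the cutoff is supported in a sufficiently thin tube. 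Combining the two regimes gives the required positivity and concludes the proof.
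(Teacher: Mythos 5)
Your proposal is essentially the same as the paper's proof: modify only the $y$-component of $\xi_1$ by interpolating $v$ with $2x_1-1$ via a cutoff, then use a thin-tube estimate to keep $\partial f>0$. The paper's version packages this as $\xi_2=\phi\,\xi_v+(1-\phi)\xi_1$ where $\xi_v$ is the fully modified field, but since $\xi_v$ and $\xi_1$ differ only in the $y$-component this is literally your $\widetilde v=(1-\phi)v+\phi(2x_1-1)$. The one genuine difference is how positivity is handled near $p$ and $q$. The paper plugs the boundary Morse form of $f$ directly into $\xi_v$ and checks $\partial_{\xi_v}f\ge 0$ on small balls $U_p,U_q$; you instead estimate the discrepancy $\partial_{\xi_2}f-\partial_{\xi_1}f$. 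Both work, but your phrasing of why the domination holds near the critical points is slightly off: shrinking the transverse radius of the tube does not help there, because the tube already contains a full neighborhood of $p$ and $q$. What actually makes the correction small near $p$ is that the factor $2x_1-1-v$ vanishes there (indeed $v\equiv-1$ and $2x_1-1\to-1$ at $p$, and symmetrically at $q$), so the correction is $o$ of the quadratic leading term of $\partial_{\xi_1}f$. Worth stating that explicitly, since as written the reader may think the thin-tube estimate alone suffices near the critical points, which it does not.

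Two further small points. First, if $\rho$ depends only on the transverse coordinates $(y,x_2,\dots,x_{n-1})$, its support is a slab unbounded in $x_1$ and so need not be compactly contained in $U_1$; you need an additional cutoff in $x_1$ (which is what the paper's bump $\phi$ provides implicitly) to guarantee $\xi_2=\xi_1$ near $\partial U_1$. Second, your remark about the simultaneous-linearization to restore Morse normal form at $p,q$ is in fact a care the paper omits: after the modification the $y$-component is $y(2x_1-1)=-y+2yx_1$, which is not linear, so $\xi_2$ is not literally in Morse normal form near the critical points, and the paper's claim that a convex combination of gradient-like fields is gradient-like quietly skips this. Your version is, if anything, more honest on that score.
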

\begin{proof}
  Define
  \[
    \xi_v=(y(2x_1-1),w(x_1),-x_2,\dots,-x_{k-1},x_{k},\dots,x_{n-1}).
  \]
  Around the critical points, $f$ is given by \eqref{eq:morse}.
  By direct calculation, we obtain that $\partial_{\xi_v}f\ge 0$
  in a neighborhood $U_p$ of $p$ and in a neighborhood $U_q$ of $q$.
  Let $U_{\gamma}$ be a neighborhood of $\gamma$. 

  The vector field $\xi_1$ is gradient-like for $f$ and $\ol{U_\gamma\setminus (U_p\cup U_q)}$ does not contain any critical points of $f$,
  and so there exists a $C>0$ such that $\partial_{\xi_1}f>C$ everywhere on $\ol{U_\gamma\setminus (U_p\cup U_q)}$.
  As $y\equiv 0$ on $\gamma$, the first coordinate of both $\xi_v$ and $\xi_1$ is small at points that are close to the boundary, that is,
  we may assume that $U_{\gamma}$ is small enough that the following two conditions hold on $U_{\gamma}$:
  \begin{align*}
    \abs{yv(y,x_1,\dots,x_{n-1})\frac{\partial f}{\partial y}} & <   C/3; \\
    \abs{y(2x_1-1)\frac{\partial f}{\partial y}}        & <   C/3.
  \end{align*}
  By the triangle inequality, we conclude that $\partial_{\xi_v}f>C/3$ everywhere on $U_{\gamma}\setminus(U_p\cup U_q)$,
  and so $\xi_v$ is gradient-like for $f$ on $U_{\gamma}\cup U_p\cup U_q$.

  To define a global vector field $\xi_2$, choose an open neighborhood $U_2$ of $\gamma$ such that $\ol{U_2}\subset U_{\gamma}\cup U_p\cup U_q$.
  Let $\phi_2\colon M\to[0,1]$
  be a smooth function supported on $U_{\gamma}\cup U_p\cup U_q$ and equal to $1$ on $U_2$. We finally define
  \[\xi_2=\phi_2\xi_v+(1-\phi_2)\xi_{1}.\]
  This vector field clearly has the desired form on $U_2$ and is gradient-like as a convex combination of gradient-like vector fields.
\end{proof}

The following result is a repetition of \cite[Assertion 1]{milnor-hcobordism}. In the applications, we will set $W$
to be an open set containing $\gamma$, whose closure is contained in $U_2$.
\begin{proposition}\label{prop:leaving-crit}
  Suppose $\wt{\xi}$ is a gradient-like vector field for $f$.
  For any open subset $\WW$ containing $\gamma$,
  there exists a neighborhood $U$ of $\gamma$, $U\subset \WW$ such that
  if a trajectory of $\wt{\xi}$ enters $U$ and leaves $\WW$, then it never re-enters $U$.
\end{proposition}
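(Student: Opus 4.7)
The strategy is to follow the argument of Milnor's Assertion 1 in \cite{milnor-hcobordism}, adapted to the boundary setting. The key ingredients are that $f$ is strictly increasing along non-constant trajectories of $\wt\xi$ (since $\partial_{\wt\xi}f>0$ off critical points), and that in the Morse normal form near the boundary critical points $p,q$ provided by \cref{def:gl-vf}, the flow of $\wt\xi$ is hyperbolic and explicit.

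First I would decompose the arc $\gamma$. Choose open Morse-chart neighborhoods $V_p,V_q$ of $p,q$ with $\ol{V_p},\ol{V_q}\subset\WW$, in which $\wt\xi$ takes its normal form, so that $\gamma_0 = \gamma\setminus(V_p\cup V_q)$ is a compact arc disjoint from all critical points of $\wt\xi$. In each Morse chart one has a \emph{local no-return} property: for any nested concentric balls $B_\rho\subset B_R$ in the chart, once a trajectory exits $B_R$ in forward time it cannot re-enter $B_\rho$, because the stable coordinates decrease monotonically in forward time while the unstable ones grow. Along $\gamma_0$ the field $\wt\xi$ is non-vanishing, so the flow-box theorem gives an open tubular neighborhood $T$ of $\gamma_0$ with $\ol T\subset\WW$ on which $\wt\xi$ is conjugate to a constant vector field; by compactness one can also arrange that $f$ increases by at least some $\delta>0$ from the bottom slice of $T$ (meeting $\partial V_p$) to the top slice (meeting $\partial V_q$).

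Next I would set $U = V_p'\cup T'\cup V_q'$, with inner refinements $V_p'\subset V_p$, $V_q'\subset V_q$ and $T'\subset T$ chosen so that the interfaces are cleanly aligned: the exit slice of $V_p'$ is contained in $T$, the entry slice of $V_q'$ is contained in $T$, and $T'$ bridges them. Given a trajectory of $\wt\xi$ that enters $U$ at time $t_1$, leaves $\WW$ at some $t_2>t_1$, and re-enters $U$ at $t_3>t_2$, one traces its forward path from $t_1$ and records each crossing of $\partial U$ and $\partial\WW$. A case analysis on which of $V_p',T',V_q'$ contains the trajectory at $t_1$ and at $t_3$ reduces the situation to one of two impossibilities: either the trajectory must re-enter the inner Morse-chart ball $V_p'$ or $V_q'$ from outside the outer ball $V_p$ or $V_q$, contradicting the local no-return property, or it must cross $T$ at two times with $f$-values inconsistent with the monotonicity gap $\delta$.

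The main obstacle is the flow-box matching between the Morse-chart balls and the tube $T$: the radii of $V_p,V_p',V_q,V_q'$ and the thicknesses of $T,T'$ must be tuned so that the interfaces are transverse to $\wt\xi$, so that every trajectory segment through $U$ is cleanly partitioned into Morse-chart and flow-box pieces, and so that the local no-return property genuinely rules out all returns in the case analysis. Once this compatibility is set up, the argument closes as in Milnor's original treatment.
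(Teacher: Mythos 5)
The paper does not actually prove this proposition; it is quoted verbatim from \cite[Assertion~1]{milnor-hcobordism}, where the argument is by compactness and contradiction: if no such $U$ existed, one would extract a sequence of trajectory arcs entering smaller and smaller neighborhoods of $\gamma$, leaving $\WW$, and then returning, and a limit of such arcs would produce a broken trajectory that starts on $\gamma$, exits $\WW$, and accumulates on $\gamma$ again --- impossible since $f$ is strictly increasing along flow lines and $\gamma$ is the unique flow line from $p$ to $q$. Your proposal takes a genuinely different, constructive route, but it hinges on a ``local no-return'' claim that is false as stated. In a Morse chart the linearized flow is $s(t)=s(0)e^{-t}$, $u(t)=u(0)e^{t}$; the Euclidean norm $\norm{s(t)}^2+\norm{u(t)}^2$ is \emph{not} monotone along trajectories --- it first decreases, to the value $2\norm{s(0)}\cdot\norm{u(0)}$, before increasing --- so a trajectory that exits $\partial B_R$ near the stable subspace (with $\norm{u}$ tiny) subsequently passes arbitrarily close to the origin and re-enters any smaller concentric ball $B_\rho$. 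These are precisely the trajectories you most need to control, and the ball-based no-return property fails for them.

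The quantity that \emph{is} flow-invariant is the product $\norm{s}\cdot\norm{u}$, and the right local neighborhoods for a no-return statement are the lens-shaped sets $H_{\rho,\varepsilon}$ that the paper introduces later, in the proof of \cref{prop:integrate}: cut out by $\norm{u}^2-\norm{s}^2\in[-\varepsilon^2,\varepsilon^2]$ together with an upper bound on $\norm{s}^2\norm{u}^2$. Their ``side'' boundary is a level set of the flow invariant, so a trajectory leaving through the side never re-enters, and one leaving through the ``top'' never re-enters because $f$ increases. If you replace your concentric balls by such sets and align the flow-box slices with $f$-level sets, the Morse-chart part of the argument can be repaired; but even then the interface matching and the case analysis you outline would have to be carried out in full, which is exactly what Milnor's compactness argument avoids and is why the paper simply cites it.
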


\section{Proof of the main theorem}\label{sec:main_proof}
We begin with the following auxiliary result.
\begin{proposition}\label{prop:no_points_in_between}
  Suppose $\xi$ and $f$ are as in Theorem~\ref{thm:main}. There exists a Morse function $\wt{f}$ having
  the same critical points as $f$, such that $\xi$ is gradient-like for $\wt{f}$ and there is no critical point $q'$
  of $\wt{f}$ other that $p$ and $q$ such that $\wt{f}(q')\in[\wt{f}(p),\wt{f}(q)]$.
\end{proposition}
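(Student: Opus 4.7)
The plan is to apply a rearrangement argument in the spirit of Milnor's \cite[Theorem~4.1]{milnor-hcobordism}, suitably adapted to manifolds with boundary. The crucial combinatorial fact to establish first is that no critical point $q'$ of $f$ distinct from $p$ and $q$ lies ``between'' them in the flow order. Define the usual partial order $\preceq$ on the critical set by declaring $x\preceq y$ iff there exists a (possibly broken) $\xi$-trajectory from $x$ to $y$. If some $q'\neq p,q$ satisfied $p\preceq q'\preceq q$, then the concatenated broken trajectory $p\to q'\to q$ would, by the Morse--Smale gluing principle, lie in the closure of a family of unbroken trajectories from $p$ to $q$ of positive dimension. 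This would produce infinitely many unbroken trajectories from $p$ to $q$, contradicting the uniqueness of $\gamma$.

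With this observation in hand, I would choose $a<b$ close to $f(p)$ and $f(q)$ respectively and prescribe new critical values for $\wt{f}$ by: setting $\wt{f}(p)=a$ and $\wt{f}(q)=b$; pushing each other $q'$ with $p\preceq q'$ to a value strictly larger than $b$; pushing each $q'$ with $q'\preceq q$ (and $p\not\preceq q'$) to a value strictly smaller than $a$; and assigning any remaining critical point an arbitrary value outside $[a,b]$. By the observation above these cases are exhaustive and mutually consistent with $\preceq$. I would then apply Milnor's rearrangement construction one critical point at a time: for every $q'$ whose value is to be changed, modify $f$ inside a flow-saturated neighborhood of $W^s(q')\cup W^u(q')$, leaving $f$ untouched near every critical point (so that the Morse normal form required by \cref{def:gl-vf} is automatically preserved) and arranging $\partial_\xi \wt{f}>0$ away from the critical set.

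The main technical point is verifying that Milnor's construction extends to boundary critical points. For an interior $q'$ this is verbatim the classical argument. For a boundary $q'$, the stable and unstable manifolds split into a part contained in $\partial M$ and a part transverse to $\partial M$, but because a gradient-like $\xi$ is tangent to $\partial M$ by \cref{def:gl-vf}, its flow preserves the decomposition $M=\partial M\sqcup \Int M$, so the saturated neighborhoods split cleanly into boundary and interior parts. The modification can therefore be performed separately on each stratum and glued without interference. After finitely many such rearrangements, the resulting $\wt{f}$ has the same critical points as $f$, admits $\xi$ as a gradient-like vector field, and no critical value of $\wt{f}$ other than $\wt{f}(p)$ and $\wt{f}(q)$ lies in $[\wt{f}(p),\wt{f}(q)]$, as required.
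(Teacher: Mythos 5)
Your overall strategy — rearrange critical values à la Milnor, adapted to manifolds with boundary — is the same one the paper uses; the paper simply cites the Global and Elementary Rearrangement Theorems of \cite{morse-wboundary} rather than re-deriving them. You correctly identify, and make explicit, the structural fact that the paper leaves implicit: no critical point $q'$ lies strictly between $p$ and $q$ in the flow order, as this is exactly what makes the target arrangement of critical values achievable. However, two points merit attention.

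First, the justification via ``the Morse--Smale gluing principle'' does not quite stand as written. To glue a broken trajectory $p\to q'\to q$ into a positive-dimensional family, you must first have the constituent rigid trajectories $p\to q'$ and $q'\to q$ available, and it is precisely these that the dimension count forbids. The cleaner route is direct: because $p$ is boundary stable and $q$ is boundary unstable, the boundary Morse normal form shows $W^u(p)\subset\partial M$ and $W^s(q)\subset\partial M$, so $\gamma$ (and the first and last legs of any putative chain) lie in $\partial M$ and one can count indices in $\partial M$ for the restricted Morse function $f|_{\partial M}$. There one finds $\ind_{\partial}(p)=k-1$ and $\ind_\partial(q)=k$; the Morse--Smale transversality condition applied in $\partial M$ then leaves no integer strictly between them, so no intermediate boundary critical point $q'$ can occur, and the containments $W^u(p),W^s(q)\subset\partial M$ rule out chains passing through the interior. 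The ``gluing $\Rightarrow$ positive-dimensional family $\Rightarrow$ contradiction'' phrasing is morally the same observation, but it presupposes what it sets out to contradict.

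Second, you assert that the rearrangement can be arranged so that $\partial_\xi\wt f>0$ away from the critical set, but do not say how, and this is in fact the only delicate point the paper singles out. In Milnor's Elementary Rearrangement (and its boundary version \cite[Lemma 4.3]{morse-wboundary}) one interpolates using an auxiliary function $\mu$ constant along flow lines of the \emph{chosen} gradient-like field; if $\mu$ is taken to be $\nabla F$-invariant for some unrelated metric (as in the original statement in \cite{morse-wboundary}), the new function $\wt f$ need not have $\xi$ as a gradient-like field. The paper's proof is essentially the observation that choosing $\mu$ to be $\xi$-invariant fixes this, after which the rest is an application of the cited rearrangement theorems. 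Your sketch would need to incorporate this choice explicitly for the final claim ``$\xi$ is gradient-like for $\wt f$'' to follow.

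Your treatment of the boundary/interior split of the saturated neighborhoods is correct and matches the boundary adaptation in \cite{morse-wboundary}.
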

\begin{proof}
  The result follows from the Global Rearrangement Theorem \cite[Proposition 4.6]{morse-wboundary}.
  The rearrangement can be carried out by successively applying the Elementary Rearrangement Theorem \cite[Proposition 4.1]{morse-wboundary}.
  Note that if one chooses the auxiliary function $\mu$ in the proof of \cite[Lemma 4.3]{morse-wboundary}
  to be preserved by the gradient-like vector field $\xi$ (instead of $\nabla F$, as in \cite{morse-wboundary}),
  then $\xi$ is a gradient-like vector field for the resulting Morse function
  $\wt{f}$ obtained by the Global Rearrangement Theorem.

  The statement of Global Rearrangement Theorem implies that there are no critical points in between $\wt{f}(p)$ and $\wt{f}(q)$.
  If there are any other critical points on the level set of $\wt{f}(p)$,
  we use the Elementary Rearrangement Theorem again to push them
  slightly below that level set.
  Likewise, any critical point on the level set of $\wt{f}(q)$ other than $q$ can be pushed slightly
  above that level set. As previously, $\xi$ is ensured to be gradient-like for $\wt{f}$.

  The resulting function satisfies the statement of Proposition~\ref{prop:no_points_in_between}, as desired.
\end{proof}

From now on, assume that such a rearrangement has been made.
In Section~\ref{sec:coordinate} we fixed the coordinate system on a neighborhood $U_2$ of $\gamma$ in which the gradient-like vector field $\xi$ for $f$
has the form described in \cref{lem:u2}. We choose now smaller neighborhood $\WW$ of $U_2$ containing $\gamma$: properties of $\WW$ will
be specified later.
As $x_1$ and $y$ play a special role in the proof of Theorem~\ref{thm:main}, we will use a slightly different notation for coordinates.
Namely, we will use the coordinates $(y,x,\bar u)$ (where $\bar u=(u_1,\dots,u_{n-2})$),
with $x$ playing the role of $x_1$ and $\bar u$ being the vector
$(x_2,\dots,x_n)$. That is to say, inside $U_2$ (in particular, inside $\WW$), the vector field $\xi$ has the form
\begin{equation}\label{eq:form_of_xi_u}
  (y(2x-1),w(x),-u_1,\dots,-u_{k-1},u_k,\dots,u_{n-2}),
\end{equation}

Set $a=\inf_{w\in \WW}f(w)$, $b=\sup_{w\in \WW}f(w)$. Upon possibly shrinking $\WW$,
by using Proposition~\ref{prop:no_points_in_between},
we may assume that the following condition is satisfied.
\begin{cond}\label{cond:ition}
  There are no critical points $q'$ of $f$ such that $f(q')\in[a,b]$ and $q'\neq p,q$.
\end{cond}

Possibly shrinking $\WW$ even further we may assume
that $\WW$ has the form $\WWII\times (-\delta,\delta)^{n-2}$ where $\WWII$ is a simply-connected open subset of $\R^2$
and $\delta>0$.
Recall that the function $w$ is positive on $(0,1)$ and negative away from $[0,1]$.
We choose $U\subset \WW$ as a neighborhood of $\gamma$ in such a way that the following holds:
\begin{cond}\label{cond:ition2}
  A trajectory of $\xi$ going through $U$ and $\WW$ never returns
  to $U$.
\end{cond}
Existence of such $U$ was proved in Proposition~\ref{prop:leaving-crit}.

First perturb $\xi$ by the formula.
\begin{equation}\label{eq:xic_def}
  \xi_c = \xi - c \eta(y, x, \bar u) \pd{}{x}
\end{equation}
where $c>0$ is a constant such that the $x$ component of $\xi_c$ is always negative on $\WW\cap\partial M$,
and $\eta$ is a suitable bump function supported in $\WW$.
For convenience, we will take $\eta$ of the form $\eta(y, x, \bar u) = \alpha(x) \beta(y) \delta(\bar u)$,
where $\alpha, \beta,\delta$
are  such that:
\begin{enumerate}[label=(E-\arabic*)]
  \item $\eta$ is equal to $1$ in a neighborhood of $\gamma$;
  \item the support of $\eta$ is contained within $U$;
  \item $\beta'(y) \ne 0$ for all $y$ such that $\beta(y)\neq 0,1$;\label{item:beta}
  \item $\alpha(x)=1$ for all $x\in[0,1]$;
  \item $\delta\equiv 1$ in a neighborhood of $(0,\dots,0)$.\label{item:equiv}
\end{enumerate}

We will now study the properties of $\xi_c$.
\begin{lemma}\label{lem:tangent}
  The vector field $\xi_c$ is tangent to $\partial M$.
\end{lemma}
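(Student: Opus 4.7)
The plan is to observe that the perturbation defining $\xi_c$ only modifies the $x$-component of $\xi$, leaving the $y$-component (the one relevant for tangency to $\partial M$) untouched.

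First I would split into the region inside $U$ and outside $U$. Outside the support of $\eta$ (in particular outside $U$), we have $\xi_c = \xi$, which is gradient-like for $f$ and hence tangent to $\partial M$ by \cref{def:gl-vf}. So there is nothing to check there.

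Inside the coordinate chart on $\WW$, the boundary $\partial M \cap \WW$ is given by $y=0$, and tangency to $\partial M$ is equivalent to the $\frac{\partial}{\partial y}$-component of the vector field vanishing on $\{y=0\}$. The vector field $\xi$ has $y$-component $y(2x-1)$ in these coordinates by \eqref{eq:form_of_xi_u}, which indeed vanishes at $y=0$. The perturbation term $-c\eta(x,y,\bar u)\pd{}{x}$ is purely in the $\pd{}{x}$ direction, so it contributes nothing to the $y$-component. Therefore the $y$-component of $\xi_c$ equals $y(2x-1)$, which still vanishes at $y=0$, giving tangency on $\WW \cap \partial M$.

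This is the whole argument; there is no real obstacle, as the content of the lemma is just that the chosen perturbation direction $\pd{}{x}$ lies tangent to $\partial M$. The reason the authors state it as a separate lemma is presumably to record this feature before moving on to the more substantive verifications (that $\partial_{\xi_c} f > 0$ away from critical points, that $\xi_c$ still has the correct local normal form at $p$ and $q$, etc.), which depend on the specific choice of the constant $c$ and the bump function $\eta$.
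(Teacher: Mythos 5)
Your argument is correct and is exactly the paper's reasoning, just spelled out in coordinates: the paper's one-line proof observes that both $\xi$ and $\pd{}{x}$ are tangent to $\partial M$, which is precisely your observation that the perturbation lies in the $\pd{}{x}$ direction and hence leaves the $y$-component of the vector field unchanged on $\{y=0\}$.
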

\begin{proof}
  This follows from the fact that both $\xi$ and $\pd{}{x}$ are tangent to $\partial M$.
\end{proof}

\begin{proposition}\label{txi-crits}
  The critical points of $\xi_c$ away from $\WW$ coincide with the critical points of $\xi$. In $\WW$, $\xi_c$
  has a single critical point $z=(y_0,x_0,0,\dots,0)$, where $x_0 = \frac{1}{2}$ and $y_0$ is uniquely specified
  by the condition
  \begin{equation}\label{eq:new-crit}
    \beta(y_0)=\frac1cw(x_0).
  \end{equation}
\end{proposition}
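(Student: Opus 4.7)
The plan is to solve the equation $\xi_c = 0$ componentwise, treating separately the region outside $\WW$ and the region inside $\WW$.

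Outside $\WW$ the bump function $\eta$ is identically zero, so $\xi_c = \xi$, and the critical points there are exactly those of $\xi$. This disposes of the first assertion.

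Inside $\WW$, using the explicit form \eqref{eq:form_of_xi_u} of $\xi$ and the factorization $\eta(x,y,\bar u)=\alpha(x)\beta(y)\delta(\bar u)$, the equation $\xi_c=0$ becomes the system
\begin{align*}
y(2x-1) &= 0, \\
w(x) - c\,\alpha(x)\beta(y)\delta(\bar u) &= 0, \\
-u_1=\cdots=u_{n-2} &= 0.
\end{align*}
The third line forces $\bar u=0$, and since $\delta\equiv 1$ in a neighborhood of the origin in the $\bar u$-variables, the factor $\delta(\bar u)$ equals $1$ at any such solution. The first line yields the dichotomy $y=0$ or $x=\tfrac12$. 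I would exclude $y=0$ directly from the hypothesis imposed in the paragraph defining $\xi_c$: the $x$-component $w(x)-c\alpha(x)\beta(0)\delta(\bar u)$ is strictly negative on $\WW\cap\partial M$ by the choice of $c$, so the second equation cannot vanish there. Hence $x=x_0=\tfrac12$, and since $x_0\in[0,1]$ we have $\alpha(x_0)=1$, so the second equation collapses to $\beta(y_0)=\tfrac{1}{c}w(x_0)$, which is precisely \eqref{eq:new-crit}.

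It remains to argue existence and uniqueness of $y_0$. Since $x_0\in(0,1)$ we have $w(x_0)>0$, and for $c$ sufficiently large the value $w(x_0)/c$ lies in $(0,1)$, which is the range of $\beta$. The assumption that $\beta\equiv 1$ near $y=0$ (needed so that $\eta\equiv 1$ near $\gamma$) and that $\beta'(y)\neq 0$ whenever $\beta(y)\in(0,1)$ forces $\beta$ to be strictly monotonic on its transition region, hence $\beta(y_0)=w(x_0)/c$ has a unique solution $y_0$. The only subtlety is checking that a single choice of $c$ can simultaneously enforce the negativity of the $x$-component on $\WW\cap\partial M$ and keep $w(x_0)/c$ inside $(0,1)$; both requirements are satisfied for sufficiently large $c$, so they are compatible and the proposition follows.
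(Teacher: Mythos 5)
Your proof is correct and follows essentially the same route as the paper's: solve $\xi_c=0$ componentwise using the explicit form \eqref{eq:form_of_xi_u} and the factorization $\eta=\alpha\beta\delta$, force $\bar u=0$, rule out $y_0=0$ via the negativity of the $x$-component on the boundary, deduce $x_0=\tfrac12$, and invoke the monotonicity of $\beta$ on its transition region for uniqueness. The extra point you raise about choosing $c$ so that $w(x_0)/c\in(0,1)$ is a reasonable thing to check, but it is in fact automatic: the condition that the $x$-component be negative on $\WW\cap\partial M$, evaluated at $x=\tfrac12$ and $\bar u=0$ (where $\alpha=\beta=\delta=1$), already forces $c>w(x_0)$.
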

We recall that $w(x)$ is the function specified by Proposition~\ref{prop:coordinates}; cf. \eqref{eq:form_of_xi_u}.
\begin{proof}
  Clearly, $\xi$ and $\xi_c$ coincide outside $U$; in particular, $\xi_c$ has no critical points inside $\WW\setminus U$.
  Note that $\xi_c$ has a non-vanishing $x$ coordinate on $\gamma$, hence neither $p$ nor $q$
  are critical points of $\xi_c$.

  Next, suppose $z=(y_0,x_0,\bar u_0)$ satisfies $\xi_c(z)=0$. The vanishing
  of the $\bar u$ components of $\xi_c(z)$ is equivalent to saying that $\bar u_0=(0,\dots,0)$.
  The vanishing of the $y$ coordinate of $\xi_c(z)$ implies that $y_0(2x_0-1)=0$. Now $y_0=0$ would mean that $z\in U\cap\partial M$, but
  then the $x$ component of $\xi_c(z)$ is non-zero. Hence, $x_0=\frac12$. This implies that
  \[
    \xi_c(z)=\left(0,w(x_0)-c\beta(y_0),0,\dots,0\right).
  \]
  Consequently, $y_0$ satisfies \eqref{eq:new-crit}. Since $w(x_0)>0$ (as per Proposition~\ref{prop:coordinates})
  and $\beta$ is strictly decreasing on the set where it takes values between $0$ and $1$ (by \ref{item:beta}),
  such $y_0$ is unique.
\end{proof}
We now study the critical point $z$ in greater detail.
\begin{lemma}\label{lem:eigenvalues}
  The critical point $z$ is hyperbolic. The linearization $D_{z}\xi_c$ has $k$ real negative eigenvalues and $n-k$
  real positive eigenvalues.
\end{lemma}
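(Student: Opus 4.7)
The plan is to compute the Jacobian $D_z\xi_c$ explicitly, recognize that it is block-triangular thanks to the chosen shape of the bump function $\eta = \alpha(x)\beta(y)\delta(\bar u)$, and then read off the eigenvalues of each block.

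First I would write out $\xi_c$ componentwise near $z$, using \eqref{eq:form_of_xi_u}:
\[
\xi_c = \bigl(y(2x-1),\; w(x) - c\alpha(x)\beta(y)\delta(\bar u),\; -u_1,\dots,-u_{k-1},\; u_k,\dots,u_{n-2}\bigr).
\]
Then I would take partial derivatives at $z = (y_0,\tfrac12,0,\dots,0)$, exploiting three simplifications coming from the properties of $\eta$ and from $x_0 = \tfrac12$: the factor $2x-1$ vanishes at $z$, the derivative $\alpha'(x_0)$ vanishes because $\alpha\equiv 1$ on $[0,1]$, and each $\partial_{u_j}\delta$ vanishes at $0$ because $\delta\equiv 1$ near the origin. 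This forces the $\bar u$-derivatives of the first two components to be zero, and makes $D_z\xi_c$ block diagonal with a $2\times 2$ block in the $(y,x)$-variables and an $(n-2)\times(n-2)$ block in the $\bar u$-variables.

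The $\bar u$-block is just the diagonal matrix with $k-1$ entries equal to $-1$ and $n-k-1$ entries equal to $+1$, contributing $k-1$ negative and $n-k-1$ positive real eigenvalues. The remaining $2\times 2$ block takes the form
\[
A = \begin{pmatrix} 0 & 2y_0 \\ -c\beta'(y_0) & w'(x_0) \end{pmatrix},
\]
whose determinant is $2cy_0\beta'(y_0)$. Here $y_0 > 0$ (as shown in \cref{txi-crits}, $y_0=0$ is incompatible with criticality) and, since $\beta(y_0) = w(x_0)/c$ lies strictly between $0$ and $1$ (the constant $c$ was chosen large enough for this), the hypothesis $\beta'(y)\neq 0$ whenever $\beta(y)\in(0,1)$ gives $\beta'(y_0)\neq 0$; because $\beta$ must decrease from $1$ near the boundary to $0$, we in fact have $\beta'(y_0)<0$. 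Hence $\det A < 0$, so the discriminant of the characteristic polynomial $\lambda^2 - w'(x_0)\lambda + \det A = 0$ is strictly positive, and its two roots are real with opposite signs, contributing exactly one additional negative and one additional positive eigenvalue.

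Combining the two blocks gives the desired count of $k$ real negative and $n-k$ real positive eigenvalues, none of them zero, so $z$ is hyperbolic. The step I expect to require the most care is justifying the sign of $\det A$: namely checking that the choice of $c$ ensures $\beta(y_0)\in(0,1)$ so that the monotonicity hypothesis on $\beta$ applies, and that $y_0$ is indeed positive rather than on the boundary. Once those are in place, the eigenvalue count is a one-line consequence of the block structure.
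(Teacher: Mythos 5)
Your proof is correct and follows essentially the same route as the paper's: restrict to the $(y,x)$-block using $\alpha(x_0)=1$, $\alpha'(x_0)=0$, $\delta\equiv 1$ near $\bar u=0$, observe that the remaining $2\times 2$ matrix has negative determinant (because $y_0>0$ and $\beta'(y_0)<0$), and conclude one positive and one negative real eigenvalue; the $\bar u$-block contributes the other $k-1$ negative and $n-k-1$ positive eigenvalues. You are slightly more explicit than the paper in verifying that the mixed $(y,x)$-versus-$\bar u$ entries of the Jacobian vanish (so the matrix is genuinely block diagonal) and in spelling out why $\beta(y_0)\in(0,1)$, which is a welcome amount of care but not a different argument.
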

\begin{proof}
  The $\bar u$-coordinates account for $k-1$ real negative eigenvalues and $n-k-1$ real positive eigenvalues.
  We have $\delta\equiv 1$ in a neighborhood of $z$.
  We can now restrict our attention to the initial two coordinates of $\xi_c$:
  \begin{equation}\label{eq:xic_coor}
    \xi_2(y, x) = (y(2x-1), w(x)-c\alpha(x)\beta(y)).
  \end{equation}
  The derivative of $\xi_c$ in these directions is given by
  \[
    D_{y, x}\xi_2 =
    \begin{bmatrix}
      \pd{\xi_2}{y} & \pd{\xi_2}{x}
    \end{bmatrix}
    =
    \begin{bmatrix}
      2x - 1               & 2y                          \\
      -c\alpha(x)\beta'(y) & w'(x) - c\alpha'(x)\beta(y)
    \end{bmatrix}
  \]
  For $z=(y_0,x_0)$, since $\alpha \equiv 1$ on $[0, 1]$, we get
  $\alpha(x_0) = 1, \alpha'(x_0) = 0$ and the above simplifies to:
  \begin{equation}\label{eq:dxy}
    D_z\xi_2 =
    \begin{bmatrix}
      0             & 2y_0    \\
      -c\beta'(y_0) & w'(x_0)
    \end{bmatrix}
  \end{equation}
  We know that $\beta'(y_0)<0$ and $y_0>0$. That is, the matrix \eqref{eq:dxy} has a negative determinant, and so, regardless
  of the sign of $w'(x_0)$, it has two real eigenvalues, one positive and one negative.
\end{proof}
The vector field $\xi_c$ has a hyperbolic critical point at $z$. We need to make sure that $\xi_c$
has the form required by \cref{def:gl-vf} near $z$.
To this end, we simply
replace $\xi_c$ near $z$ by its linear part. The new vector field has the form \eqref{eq:morse_normal} near $z$.

\begin{proposition}\label{assure-normalform}
  For any neighborhood $U_{z}\subset U$ of $z$,
  there exists a vector field $\xip$, agreeing with $\xi_c$ away from $U_{z}$,
  such that, in some system of coordinates on $U_{z}$, $\xip$ has the form \eqref{eq:morse_normal}.
  Moreover, there exists a $C>0$ such that
  \begin{eqnarray*}
    \norm{\xip(z')-\xi_c(z')} &<& C\norm{z-z'} ^2 \\
    \norm{\xip(z')-\xil(z')} &<& C\norm{z-z'} ^2
  \end{eqnarray*}
  for any $z'\in U_z$.
\end{proposition}
\begin{proof}
  Let $\xil$ be the linear part of $\xi_c$ at $z$.
  Let $\tau$ be a bump function supported on $U_{z}$, equal to $1$ on a smaller neighborhood of $z$.
  Set
  \[
    \xip= \xi_c(1-\tau)+\xil\tau.
  \]
  Then, as long as $\tau\equiv 1$, we have $\xip=\xil$, which means that in the system of coordinates corresponding to the eigenvectors of $\xi_c(z)$,
  the vector field $\xip$ has the form \eqref{eq:morse_normal}, as desired.

  Note that the $\xip$, $\xi_c$, and $\xil$ have common linear terms at $z$. Therefore, the difference between $\xip$, $\xi_c$ and $\xil$
  at a point $z'$ is
  of order $\norm{z-z'}^2$ by the Taylor expansion formula. Given that, a standard argument allows us to find a $C>0$ satisfying the statement of the
  lemma.
\end{proof}

We will now aim to construct a new Morse function, whose gradient-like vector field will be $\xip$.
The main step towards this result is the following lemma:
\begin{lemma}\label{lem:key_lemma}
  Any trajectory of $\xip$ that enters $\WW$, either converges to $z$ or leaves $\WW$ in finite time.
\end{lemma}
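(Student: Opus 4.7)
The plan is to show that a $\xip$-trajectory $\gamma(t)$ which stays in $\WW$ must converge to $z$, by reducing the analysis to the $2$-dimensional dynamics in the invariant slice $\bar u=0$.

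The key observation is that the $u_j$-components of $\xip$ are completely explicit throughout $\WW$. Indeed, by~\eqref{eq:form_of_xi_u} the perturbation $-c\eta\,\partial/\partial x$ defining $\xi_c$ does not touch the $u_j$-components, so they coincide with those of $\xi$: namely $-u_j$ for $j<k$ and $+u_j$ for $j\ge k$. The Jacobian $D_z\xi_c$ computed in the proof of \cref{lem:eigenvalues} is block-diagonal, so $\xil$ has the same $u_j$-components $\pm u_j$. Since $\xip=(1-\tau)\xi_c+\tau\xil$ by \cref{assure-normalform}, this persists for $\xip$ throughout $\WW$. Hence along any trajectory one has $u_j(t)=u_j(t_0)e^{\pm(t-t_0)}$. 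If some $u_j(t_0)\neq 0$ with $j\ge k$, then $|u_j|$ exceeds $\delta$ in finite time and $\gamma$ exits $\WW=\WWII\times(-\delta,\delta)^{n-2}$. One may therefore assume $u_j(t_0)=0$ for all $j\ge k$, in which case $\gamma$ remains in the invariant hyperplane $\{\bar u_+=0\}$ and approaches the $2$-dimensional slice $\{\bar u=0\}$ exponentially fast (since the stable components satisfy $u_j(t)\to 0$ as $t\to\infty$).

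The second step is to analyze the restriction of $\xip$ to the invariant slice $\{\bar u=0\}$, a planar vector field on $\WWII$ whose unique critical point is the hyperbolic saddle $z$ (by the $(y,x)$-block in \cref{lem:eigenvalues}). I would invoke the Poincar\'e--Bendixson theorem in the simply-connected region $\WWII$: the $\omega$-limit set of a bounded forward trajectory is a fixed point, a periodic orbit, or a homoclinic loop at $z$. Periodic orbits are ruled out by the Poincar\'e--Hopf index formula, since the only critical point available to be enclosed is $z$, of index $-1$, while a periodic orbit would enforce enclosed total index $+1$.

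The main difficulty I anticipate is excluding a homoclinic loop at $z$ for the planar restriction of $\xip$. The strategy would be to transport the non-recurrence of $\xi$: on $\WWII\setminus U$, the planar restriction of $\xip$ equals that of $\xi$, and $f$ being a strict Lyapunov function for $\xi$ precludes closed orbits in that region. Any homoclinic loop would therefore have to enter $U$, and one would use \cref{prop:leaving-crit} together with the definite $-x$ drift inside $U$ produced by the perturbation $-c\eta\,\partial/\partial x$ to rule out such a loop returning to $z$. Once homoclinic loops are excluded, $\omega(\gamma)=\{z\}$, giving $\gamma(t)\to z$ as desired.
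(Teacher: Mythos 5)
Your reduction of the general case to the planar slice $\{\bar u=0\}$ is correct and is essentially the paper's Step~3: the $\bar u$-components of $\xi'$ are linear and decoupled, so unstable components force escape and stable ones decay, leaving the $(y,x)$-dynamics. The planar part, however, is where your proposal departs from the paper and where a genuine gap remains. The paper handles the planar case (Steps~1--2 of Lemma~\ref{lem:key_lemma}) by a direct nullcline analysis: the curves $\Gamma_x$ and $\Gamma_y$ divide $\WWII$ into four sectors in which $\xi_c$ has a definite sign, each sector is crossed at most once, and the region around $z$ is controlled by the local Lyapunov-type function $g(s,t)=\tfrac12(s^2-t^2)$ together with a Milnor-style non-recurrence argument. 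This shows directly that every trajectory either reaches $z$ or exits $\WWII$, with no appeal to $\omega$-limit set classification. Your route via Poincar\'e--Bendixson plus the Poincar\'e--Hopf index is a legitimate alternative for ruling out periodic orbits, but, as you yourself flag, the remaining possibility that the $\omega$-limit set is a homoclinic graphic at the saddle $z$ is exactly the hard case, and your sketch for excluding it does not go through. Proposition~\ref{prop:leaving-crit} is a statement about trajectories of a vector field that is gradient-like for $f$; the perturbed field $\xi'$ is emphatically \emph{not} gradient-like for $f$ inside $U$ --- indeed the entire point of the construction is that $f$ fails to increase along $\xi'$ there --- so you cannot invoke that proposition on a putative homoclinic orbit, which necessarily passes through $U$ (it passes through $z\in U$). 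Likewise, the ``definite $-x$ drift'' is only present where $\eta\neq 0$ and does not by itself preclude a loop that exits and re-enters the support of $\eta$. To make your approach work you would either need a bona fide index-theoretic exclusion of homoclinic graphics at a saddle with no interior critical point (a more delicate statement than the one for periodic orbits), or you would end up reproducing something like the paper's sector/nullcline analysis. There is also a smaller point to tidy up: Poincar\'e--Bendixson requires the forward orbit to lie in a compact subset of the region where the planar flow is defined, and the boundary $\{y=0\}$ is an invariant edge of the half-plane; this is manageable (the only fixed point is the interior saddle $z$, and no closed orbit can lie in the one-dimensional invariant boundary), but it needs to be said.
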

\begin{proof}
  The proof is done in three steps. In Step~1,
  we provide a proof in two-dimensional case for the vector field $\xi_c$. In Step~2, we will show that this argument works
  also for $\xip$. In Step~3, we show that the higher-dimensional case can be reduced to the two-dimensional case.

  \pfstep{1}{The case of $\xi_c$ in two dimensions}{lem:key_lemma:step1}
  \begin{figure}
    \begin{tikzpicture}[scale=3]
      \draw[ultra thin,->] (0, 0) -- node[scale=0.8, below, pos=1] {$y$} (2.1, 0);
      \draw[ultra thin,->] (0, -0.5) -- node[scale=0.8, right, pos=1] {$x$} (0, 1.5);

      \coordinate (P) at (0, 0);
      \coordinate (Q) at (0, 1);

      \draw[ultra thick,red] (0, 0.5) -- node[below,scale=0.8, very near end] {$\Gamma_y$}(2, 0.5);
      \draw[ultra thick,red] (0, 1.2) -- (0, -0.2);

      \draw (P) node [above left, scale=0.8] {$p$};
      \fill (P) circle (0.02);
      \draw (Q) node [below left, scale=0.8] {$q$};
      \fill (Q) circle (0.02);

      \draw[blue!70!red,ultra thick] (1,1) --  node[near end, above, scale=0.8] {$\Gamma_x$} (2,1);
      \draw[blue!70!red, ultra thick] (1,0) -- (2,0);
      \draw[blue!60!green,ultra thick] (1,1) .. controls ++(-0.3, 0) and ++(-0.3, 0) ..  (1,0);
      \fill[blue] (1,1) circle (0.03);
      \fill[blue] (1,0) circle (0.03);


      \coordinate (A) at (1.3, 0.7);
      \draw[green!50!black, ->] (A) -- ++ (0.1,0);
      \draw[green!50!black, ->] (A) -- ++ (0,0.1);
      \fill[green!50!black] (A) circle (0.02);
      \draw (A) + (0.4, 0.1) node [below left, scale=0.8] {$\Omega_1$};

      \coordinate (B) at (0.6, 0.8);
      \draw[green!50!black, ->] (B) -- ++ (0.1,0);
      \draw[green!50!black, ->] (B) -- ++ (0,-0.1);
      \fill[green!50!black] (B) circle (0.02);
      \draw (B) + (-0.2, 0) node [below left, scale=0.8] {$\Omega_2$};

      \coordinate (C) at (1.3, 0.2);
      \draw[green!50!black, ->] (C) -- ++ (-0.1, 0);
      \draw[green!50!black, ->] (C) -- ++ (0, 0.1);
      \fill[green!50!black] (C) circle (0.02);
      \draw (C) + (0.4, 0.1) node [below left, scale=0.8] {$\Omega_4$};

      \coordinate (D) at (0.6, 0.3);
      \draw[green!50!black, ->] (D) -- ++ (-0.1,0);
      \draw[green!50!black, ->] (D) -- ++ (0,-0.1);
      \fill[green!50!black] (D) circle (0.02);
      \draw (D) + (-0.2, 0) node [below left, scale=0.8] {$\Omega_3$};

      \coordinate (Z) at (0.77, 0.5);
      \fill (Z) circle (0.03);
      \draw (Z) node [below right, scale=0.8] {$z$};

      \draw[gray, dashed] (0, 1.2) -- node[above] {$\WW$} (2, 1.2) -- (2, -0.2) -- (0, -0.2);
      \draw[gray,thin] (1.5,1) -- ++(0.3,0.3) -- node [above,near end, scale=0.7] {$\Gamma^0_x$} ++(0.5,0);
      \draw[gray,thin] (1.5,0) -- ++(0.3,-0.3) -- node [above,near end, scale=0.7] {$\Gamma^0_x$} ++(0.5,0);
      \draw[gray,thin] (0.79,0.6) -- node[above, very near end, scale=0.7] {$\Gamma^\kappa_x$} ++(-1.7,0);
    \end{tikzpicture}
    \caption{Proof of \Cref{lem:key_lemma}. Step 1. The arrows indicate possible directions of the vector field $\xi_c$. For example, at any point of $\Omega_1$, $\xi_c$ belongs to $\R_{\ge0}\times\R_{\ge0}$.}\label{fig:key_lemma}
  \end{figure}

  We proceed by analyzing the phase portrait. Recall that in the two-dimensional case, $\xi_c$ is given
  by the formula \eqref{eq:xic_coor}. We denote the coordinates of $\xi_c$ by
  \begin{align*}
    \xi_{c,x} & = w(x)-c\alpha(x)\beta(y) \\
    \xi_{c,y} & = y(2x-1)
  \end{align*}
  Let
  \[
    \Gamma_x = \xi_{c,x}^{-1}(0), ~~~ \Gamma_y = \xi_{c,y}^{-1}(0).
  \]
  The latter is easily seen as $\Gamma_y = \br{x = \frac{1}{2}} \cup \br{y = 0}$.

  In order to obtain a more explicit description of $\Gamma_x$,
  note that $\beta^{-1}(r)$ is well-defined for any $r \in (0,1)$.
  Set \[
    \kappa(x) = \beta^{-1}\left(\frac{w(x)}{c\alpha(x)}\right).
  \]
  Since $w$ is negative away from $[0,1]$, all points of $\Gamma_x$ satisfy $x \in [0, 1]$ and $\alpha(x) = 1$.
  Then $(\kappa(x),x)_{x \in (0, 1)}$ parametrizes the subset
  \[
    \Gamma_x^\kappa = \br{ (y, x) \in \Gamma_x \mid \beta(y) \ne 0, \beta(y) \ne 1}.
  \]
  However, if $\beta(y) = 1$, then \[ \xi_{c,x}(y, x)=w(x)-c < 0 \] by construction, so $\Gamma_x\cap \beta^{-1}(1)$ is empty.
  Similarly, if $(y, x) \in \Gamma_x$ and $\beta(y) = 0$, then \[ \xi_{c,x}(y,x)=w(x) = 0, \] that is, $x \in \br{0, 1}$. Therefore,
  $\Gamma_x=\Gamma_x^\kappa \cup \Gamma_x^0$, where
  \begin{align*}
    \Gamma_x^\kappa & = \br{ (\kappa(x),x) \mid x \in (0, 1)}     \\
    \Gamma_x^0      & = \br {(y,x) \mid x\in\{0,1\},\ \beta(y)=0}
  \end{align*}
  Notice that $\beta'(y) = 0$ only if $\beta(y) \in {0, 1}$. Now $\kappa(x)\in(0,1)$ for $x\in(0,1)$. Therefore,
  applying the inverse function theorem lets us conclude that
  \begin{equation}\label{eq:kappa-der}
    \kappa'(x) = \frac{1}{\beta'(\kappa(x))} \textrm{ is finite for } x \in (0, 1).
  \end{equation}
  In the light of \eqref{eq:kappa-der}, the vector $\frac{\partial}{\partial x}$ always crosses $\Gamma_x^\kappa$ transversely.
  Similarly, $\frac{\partial}{\partial x}$ always crosses $\Gamma_y$ transversely.
  Therefore, it is clear from the phase portrait that each trajectory of $\xi_c$ can cross $\Gamma_x^\kappa$ or $\Gamma_y$ only once.

  The level sets $\Gamma_x$ and $\Gamma_y$ divide $\WW$ into four regions
  $\Omega_1,\dots,\Omega_4$, as in \Cref{fig:key_lemma}. Let $\theta$ be the forward trajectory of $\xi_c$ starting at $p_s$.
  Notice that $\theta$ cannot cross $\Gamma_x^0$ because $\xi_c$ is tangent to $\Gamma_x^0$. Now:
  \begin{itemize}
    \item if $p_s \in \Omega_1$, then $\xi_{c,y}$ remains positive and separated from $0$ along $\theta$, so $\theta$ leaves $\WW$ to the right;
    \item if $p_s \in \Omega_3$, then $\xi_{c,x}$ remains negative and separated from $0$ along $\theta$, so $\theta$ always leaves $\WW$ to the bottom;
    \item if $p_s \in \Omega_2$ or $p_s \in \Omega_4$, then $\theta$ either
          \begin{itemize}
            \item hits the critical point $z$;
            \item leads to one of $\Omega_1$ or $\Omega_3$, and, by the previous considerations, eventually leaves $\WW$.
          \end{itemize}
  \end{itemize}
  In other words, any trajectory of $\xi_c$  entering $\WW$ either hits $z$ or leaves $\WW$ in finite time.

  \pfstep{2}{The case of $\xip$ in two dimensions}{lem:key_lemma:step2}

  We begin with the following observation. For any $U_0\subset\WW$ containing $z$ and sufficiently small,
  there exists another $U_1 \subset U_0$ being an open neighborhood of $z$,
  such that if the trajectory of $\xi_c$ starts in $U_1$ and then leaves $U_0$,
  then does not return to $U_1$, unless it leaves $\WW$.
  To see this, we use an argument similar to the one used in the proof of Proposition~\ref{prop:leaving-crit}
  given in \cite{milnor-hcobordism}.

  Namely, if for all $U_1$ the trajectory of $\xi_c$ leaves $U_0$ and subsequently returns to $U_1$,
  upon passing to a limit, we would construct a trajectory both starting and terminating at $z$.
  It is clear from the phase portrait, that every trajectory starting at $z$ leads either to $\Omega_1$ or to $\Omega_3$.
  Similarly, every trajectory entering $z$, comes from either $\Omega_2$ or $\Omega_4$.
  We already argued that the trajectory in $\Omega_1$
  and $\Omega_3$ cannot enter $\Omega_2$ or $\Omega_4$ without leaving $\WW$, hence no trajectory starts and terminates at $z$
  without leaving $\WW$.

  Before choosing an appropriate neighborhood $U_0$, we need to provide some estimates.
  Let $v_+$ and $v_-$ be length one eigenvectors of the matrix $D_{z}\xi_c$, where $v_+$ corresponds to
  the positive eigenvalue, while $v_-$ corresponds to the negative eigenvalue.
  Let $(s,t)$ be local coordinates near $z$ such that $v_+$ and $v_-$ correspond to $(1,0)$ and $(0,1)$ respectively.
  In other words, in these coordinates, we have:
  \begin{align*}
    \xil(s,t)  & = (c_+ s,c_- t)                              \\
    \xi_c(s,t) & = (c_+ s,c_- t)+O\left(\norm{(s,t)}^2\right)
  \end{align*}
  for the eigenvalues $c_+,c_-$ of $D_z\xi_c$, satisfying $c_+>0>c_-$.

  Consider the function
  \[ 
    g(s,t)=\frac{1}{2}\left(s^2-t^2\right).
  \] 
  With this definition, we have \[\partial_{\xil}g=c_+s^2-c_-t^2\ge 0\] with equality only at $(0,0)$.
  \Cref{assure-normalform} implies that the difference between $\xil$ and $\xi_c$ is quadratic in $s$ and $t$.
  Therefore, the difference between $\partial_{\xil}g$ and $\partial_{\xi_c}g$ is cubic in $s$ and $t$.
  That is to say,
  \[
    \partial_{\xi_c}g=c_+s^2-c_-t^2+O\left(\norm{(s,t)}^3\right).
  \]
  In particular, there exists a  neighborhood $U_0$ of $z$, such that $\partial_{\xi_c}g>0$ everywhere except at $z$.
  Find a smaller $U_1 \subset U_0$ such that any trajectory of $\xi_c$ entering $U_1$ leaving $U_0$ does not return
  to $U_1$.

  Let $\tau$ be a cut-off function supported in $U_1$ and
  let $\xip$ be the vector field constructed in~\Cref{assure-normalform} above using this function $\tau$.
  As $\partial_{\xil}g,\partial_{\xi_c}g>0$ on $U_1\setminus\{z\}$, the same inequality holds for a convex combination thereof.
  Let $\theta$ be a trajectory of $\xip$.
  \begin{itemize}
    \item If $\theta$ stays forever in $U_0$, then $\partial_{\xip}g\ge 0$ implies that $\theta$ must hit $z$;
    \item if $\theta$ does not enter $U_1$, then it is actually a trajectory of $\xi_c$ that does not hit $z$, so it must leave $\WW$;
    \item finally, suppose $\theta$ starts in $U_1$, does not hit $z$ and eventually leaves $U_0$.
          We know that this trajectory cannot return to $U_1$.
          Then, as soon as $\theta$ leaves $U_1$, it remains a trajectory of $\xi_c$. In particular,
          it is a trajectory of $\xi_c$ not hitting $z$. Therefore, $\theta$ must leave $\WW$.
  \end{itemize}
  These case conclude the proof of Step~2.

  \pfstep{3}{The general case}{lem:key_lemma:step3}
  We assume that $\WW$ has a product structure $\WW=\WWII\times I^{n-2}$, where $I=(-\sigma,\sigma)$ for some $\sigma>0$ and $\WWII$
  is an open contractible subset of $\R^2$. Consider the projection $\Pi\colon \WW\to \WWII$ given by
  $\Pi(y,x,\bar u)=(y,x)$. As passing from $\xi$ to $\xi_c$ affects only the $x$ coordinate (compare \eqref{eq:xic_def}),
  and changing
  $\xi_c$ to $\xip$ affects only the first two coordinates,
  we conclude that $\xip$ has the form
  \[
    \xip = (\alpha_1(y,x,\bar u),\alpha_2(y,x,\bar u),-u_1,\dots,-u_{k-1},u_k,\dots,u_{n-2}),
  \]
  where $\alpha_1$ and $\alpha_2$ are smooth.
  Suppose $\theta$ is a trajectory of $\xip$ staying in $\WW$ forever.
  The $\bar u$ components of $\xip$ are $(-u_1,\dots,-u_{k-1},u_k,\dots,u_{n-2})$, that is,
  if $\theta$ stays forever in $\WW$, then the $u_k,\dots,u_{n-2}$ coordinates have to be zero on $\theta$ (otherwise,
  they eventually become very large). Conversely, the $u_1,\dots,u_{k-1}$, are decreasing along $\theta$, and so, all the $\bar u$-coordinates
  converge to $(0,\dots,0)$. By~\ref{item:equiv}, it follows that the trajectory
  of $\theta$ eventually is contained in the set $\br{\delta(\bar u)=1}$. That is to say, $\Pi(\theta)$ is a trajectory
  of the vector field $\xip_2$ in $\WWII$, where
  \[
    \xip_2 = (\alpha_1(y,x,0,\dots,0),\alpha_2(y,x,0,\dots,0))
  \]
  is easily to seen to be
  precisely the vector field $\xip$ from the two-dimensional discussion in the previous step.
  Any trajectory of $\xip_2$ either converges to the critical point $(y_0, x_0)$
  or leaves $\WWII$. Therefore, if $\theta$ stays forever in $\WW$, $\Pi(\theta)$ has to converge to $(y_0,x_0)$.
  As we have already mentioned, in that case, the $\bar u$-coordinates of $\theta$ converge to $0$. Consequently, if $\theta$ stays forever in $\WW$, then it terminates at $z$.
\end{proof}
\begin{remark}
  What we proved in Lemma~\ref{lem:key_lemma} concerns the forward behavior of a trajectory. However, the backward result is also true:
  if a trajectory stays forever in the past in $\WW$, it has to start at $z$. The proof is completely analogous.
\end{remark}
As a corollary, we prove the following result.
\begin{corollary}
  Suppose $a,b$ are such that $\WW\subset f^{-1}[a,b]$ and the only critical points of $f$ in $f^{-1}[a,b]$ are $p$ and $q$.
  If $\theta$ is a trajectory of $\xip$, then:
  \begin{itemize}
    \item Either it exits $f^{-1}[a,b]$ through $f^{-1}(b)$, or it terminates at $z$.
    \item Either it enters $f^{-1}[a,b]$ through $f^{-1}(a)$, or it starts at $z$;
  \end{itemize}
\end{corollary}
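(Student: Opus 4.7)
The plan is to study the forward behavior of a trajectory $\theta$ of $\xip$ that meets $f^{-1}[a,b]$; the backward statement will follow by time reversal, using the remark that accompanies \cref{lem:key_lemma}. The central observation is that off $U$ one has $\xip=\xi$, which is gradient-like for $f$, so on the compact set $f^{-1}[a,b]\setminus U$ (which contains no critical points of $f$, since $p,q\in U$) the derivative $\partial_{\xip}f=\partial_{\xi}f$ is bounded below by some constant $c_0>0$. The strategy is then to show that, unless $\theta$ converges to $z$, it is eventually trapped in $M\setminus U$, at which point the uniform estimate $\partial_{\xip}f\ge c_0$ forces it across $f^{-1}(b)$ in finite time.

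If $\theta$ never visits $U$, then $\theta$ is already a $\xi$-trajectory inside $f^{-1}[a,b]\setminus U$, and the monotonicity of $f$ immediately yields an exit through $f^{-1}(b)$. Otherwise, pick a time $t_0$ at which $\theta$ enters $U$; then $\theta$ enters $\WW$, and \cref{lem:key_lemma} offers two alternatives: either $\theta$ converges to $z$, giving the second clause of the corollary, or $\theta$ leaves $\WW$ at some finite time $t_1>t_0$. The remaining task is to show that $\theta$ cannot re-enter $U$ after $t_1$.

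This is the crux of the argument and the step I expect to be the main obstacle; it is where \cref{prop:leaving-crit} is brought to bear. After possibly shrinking $U$ so that $\ol{U}\subset\WW$, the point $\theta(t_1)\in\partial\WW$ lies outside $U$, and just before $t_1$ the trajectory $\theta$ sits in $\WW\setminus U$, a region where $\xip=\xi$. Consequently the $\xi$-orbit $\sigma$ through $\theta(t_1)$ coincides with $\theta$ on the interval $(t_0',t_1]$, where $t_0'<t_1$ is the last instant at which $\theta$ exited $U$; at $t_0'$ the common vector $\xi=\xip$ points back into $U$, so $\sigma$ extends backward across $\partial U$ into the interior of $U$. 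In other words $\sigma$ is a genuine $\xi$-trajectory that passes through $U$ and subsequently leaves $\WW$ at $t_1$, so \cref{prop:leaving-crit} forbids it from returning to $U$ at any $t>t_1$. Since $\xip=\xi$ off $U$, the $\xip$-trajectory $\theta$ agrees with $\sigma$ for all $t>t_1$ and remains in $M\setminus U$; the lower bound $\partial_{\xip}f\ge c_0$ then forces $\theta$ to exit $f^{-1}[a,b]$ through $f^{-1}(b)$ within time at most $(b-a)/c_0$.

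The backward direction is handled by applying the same scheme to the flow of $-\xip$ with respect to $-f$, using the time-reversed variant of \cref{lem:key_lemma} noted in the remark following its proof, and observing that \cref{prop:leaving-crit} remains valid for $-\xi$, which is gradient-like for $-f$.
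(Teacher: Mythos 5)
Your argument is correct and follows the same route as the paper's proof: the same trichotomy (the trajectory misses $U$; or it enters $U$ and, by \cref{lem:key_lemma}, either converges to $z$ or leaves $\WW$), with \cref{prop:leaving-crit} used to rule out re-entry into $U$, after which the gradient-like behaviour of $\xi$ off $U$ forces the exit through $f^{-1}(b)$, and the backward statement obtained by symmetry. Your one addition --- that \cref{prop:leaving-crit} is stated for a gradient-like vector field and must therefore be applied to the $\xi$-orbit $\sigma$ agreeing with $\theta$ between its last departure from $U$ and its exit from $\WW$, rather than to the $\xip$-trajectory $\theta$ directly --- makes explicit a point that the paper passes over tersely.
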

\begin{proof}
  Recall that the set $U$ was defined as a subset of $W$ on which all the changes are to be made, that is $\xi=\xi_c=\xi'$ away
  from $U$ (see Condition~\ref{cond:ition2}). That is to say, if $\theta$ does not intersect $U$, it is a trajectory of $\xi$. Any trajectory of $\xi$ that does not hit $U$, flows
  from $f^{-1}(a)$ to $f^{-1}(b)$.

  Suppose $\theta$ enters $U$. By Lemma~\ref{lem:key_lemma}, either it hits $z$, or it leaves $\WW$.
  If it hits $z$, we are done;
  if it leaves $W$, then by Proposition~\ref{prop:leaving-crit}, it does never return to $U$, but then,
  as soon as $\theta$ leaves $\WW$, it actually becomes a trajectory of $\xi$, and so it must terminate at a critical point.
  As there are no critical points of $f$ in $f^{-1}[a,b]\setminus W$, $\theta$ must hit a critical point with a critical value not in $[a,b]$.
  Since $f$ increases along $\theta$, we conclude that $\theta$ terminates above the level set $f^{-1}(b)$.
  This proves the first part of the corollary. The proof of the other is analogous.
\end{proof}

As a final stage, we construct a Morse function $g$ whose gradient-like vector field is $\xip$.
The construction follows from Vector Field Integration Lemma \cite{BorodzikPowell}, which in turn generalizes
\cite[Assertion 5, page 54]{milnor-hcobordism}. Nevertheless, the proof in \cite{BorodzikPowell} has a small technical
flaw (namely, the function that is constructed is not necessarily continuous unless the vector field is properly rescaled),
therefore we give an independent construction. We note that in \cite{BorodzikPowellTeichner}, there will
be given a more general statement of the Vector Field Integration Lemma.
The following result concludes the proof of Theorem~\ref{thm:main}.
\begin{proposition}\label{prop:integrate}
  There exists a Morse function $g$ whose gradient-like vector field is $\xip$,
  such that $g=f$ away from $f^{-1}(a,b)$.
\end{proposition}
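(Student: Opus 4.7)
The plan is to construct $g$ in three pieces --- $g=f$ outside $f^{-1}(a,b)$, an explicit quadratic form in the Morse chart around $z$, and a flow-integrated interpolation in between --- following the outline of the Vector Field Integration Lemma of \cite[Assertion 5]{milnor-hcobordism} but handling separately the smoothness issue near the separatrices of $z$ that the excerpt flags as the defect in \cite{BorodzikPowell}.

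First, set $g=f$ on $M\setminus f^{-1}(a,b)$; since $\xip=\xi$ there, $\partial_{\xip}g>0$ away from critical points for free. Second, in a small ball $N_z$ around $z$ provided by Proposition~\ref{assure-normalform}, where $\xip$ coincides with its linear part $\xil$ and has $k$ negative and $n-k$ positive real eigenvalues $c_1,\dots,c_n$ in diagonalizing coordinates $s_1,\dots,s_n$, I declare $g(s):=c_z+\sum_{i=1}^n \operatorname{sgn}(c_i)\,s_i^2$ for a chosen $c_z\in(a,b)$. Then $z$ is a nondegenerate critical point of $g$ of index $k$, and $\partial_{\xip}g=2\sum_i |c_i|\,s_i^2>0$ off $z$.

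The remaining task is to interpolate $g$ across the transit region $R:=f^{-1}(a,b)\setminus N_z$. By the preceding corollary, every trajectory of $\xip$ through a point of $R$ connects, in finite time, two of the three hypersurfaces $f^{-1}(a)$, $f^{-1}(b)$, $\partial N_z$, on which $g$ is already specified. On such a trajectory $\theta\colon[0,T]\to R$ with prescribed endpoint values, I set $g(\theta(t))=g(\theta(0))+\int_0^t \rho(\theta(\tau))\,d\tau$, where the positive integrand $\rho$ is chosen smoothly in the foliation parameter to satisfy $\int_0^T\rho\,d\tau=g(\theta(T))-g(\theta(0))$. This automatically gives $\partial_{\xip}g=\rho>0$ on $R$.

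The main obstacle will be $C^\infty$ smoothness of the resulting $g$, in two places. Across $\partial N_z$ it is straightforward: one arranges $\rho$ on the $R$-side so that its boundary values agree to all orders with $\partial_{\xip}g$ computed from the explicit formula on $N_z$. Across the stable and unstable manifolds of $z$ inside $R$, however, the trajectory travel time $T$ diverges as one approaches a separatrix, and the naive choice of $\rho$ degenerates there. This is the heart of the proof and the precise issue the excerpt warns about. I will resolve it by reading off from $\xil$ the exponential rates at which nearby trajectories approach $z$, and then choosing $\rho$ along near-separatrix trajectories to decay at the matching exponential rate determined by the relevant eigenvalue of $\xil$; the improper time-integral then converges and $g$ extends smoothly across the separatrices, matching $g|_{N_z}$ and completing the construction.
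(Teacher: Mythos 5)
Your global plan --- an explicit quadratic form near $z$, $g=f$ outside $f^{-1}(a,b)$, and a flow-time interpolation in between --- is the same as the paper's, but the difficulty is mislocated and the proposed repair does not reach it. Once a fixed ball $N_z$ around $z$ is excised, travel times of $\xip$-arcs contained in $\ol{f^{-1}(a,b)\setminus N_z}$ are uniformly bounded: this set is compact and avoids the unique zero $z$ of $\xip$ in $f^{-1}[a,b]$, and the corollary preceding the proposition guarantees that every such arc exits in finite time. There is therefore no diverging $T$ ``near the separatrices inside $R$,'' and the exponential-decay choice of $\rho$ in your last paragraph repairs a problem that does not occur. What is genuinely delicate, and what you dismiss as ``straightforward,'' is the gluing across $\partial N_z$. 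A round sphere is not adapted to the flow: it carries a codimension-one locus along which $\xip$ is tangent to $\partial N_z$, and across that locus the family of trajectories passes from ``grazes $N_z$ along an arbitrarily short chord on which $g\equiv g_0$'' to ``misses $N_z$ and carries $g_1$ unconstrained from $a$ to $b$.'' Making $g$ and $\partial_{\xip}g$ smooth across this characteristic locus is exactly the analytic heart of the lemma, and the proposal contains no argument for it.

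The paper eliminates the issue by design, replacing the ball with the flow-adapted box $H_{\rho,\varepsilon}=\{-\alpha^2+\beta^2\in[-\varepsilon^2,\varepsilon^2],\ \alpha^2\beta^2\le(\rho^4-\varepsilon^4)/4\}$, whose boundary decomposes into $\Xin$ and $\Xout$ (contained in two level sets of $g_0$) together with $\Xtan$, which is \emph{tangent to} $\xip$. No trajectory grazes $\partial H_{\rho,\varepsilon}$: each either crosses cleanly from $\Xin$ to $\Xout$ or never meets the box, and $g_0$ is constant on $\Xin$ and on $\Xout$, so continuity of $g$ at the boundary is automatic (Lemma~\ref{lem:for_future}). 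The gluing is then done with a cut-off $\phi$ that is constant along trajectories of $\xip$, so $\partial_{\xip}\phi=0$ and $\partial_{\xip}g=\phi\,\partial_{\xip}g_0+(1-\phi)\,\partial_{\xip}g_1>0$ follows with no estimate. Adopting this Milnor-type box (cf.\ \cite[Section 2.4]{morse-wboundary}) rather than a round ball is the ingredient your argument is missing.
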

\begin{proof}
  The high-level idea is to define $g$ by an explicit formula in a neighborhood of $z$
  and by interpolation elsewhere.

  Set $c=\frac12(a+b)$. Inside $U_{z}$ (see Proposition~\ref{assure-normalform}) define a function
  \[g_0(s,t,u_1,\dots,u_{n-2})=c+s^2-t^2-u_1^2-\dots-u_{k-1}^2+u_k^2+\dots+u_{n-2}^2.\]
  Write $\alpha^2:=s^2+u_k^2+\dots+u_{n-2}^2$, $\beta^2:=t^2+u_1^2+\dots+u_{k-1}^2$.
  For $\rho>\varepsilon>0$ define (cf. \cite[Section 2.4]{morse-wboundary}) the subset of $U_{z}$:
  \[H_{\rho,\varepsilon}:=\{-\alpha^2+\beta^2\in [-\varepsilon^2,\varepsilon^2],\ \ \alpha^2\beta^2\leqslant (\rho^4-\varepsilon^4)/4\}.\]
  For sufficiently small $\rho$, the set $H_{\rho,\varepsilon}$ is compact (which is a formal way of saying that it is contained
  in the interior of $U_{z}$).
  Let us now define the following parts of the boundary of $H_{\rho,\varepsilon}$; see Figure~\ref{fig:u_rho}.
  \begin{equation}\label{eq:bnd}
    \begin{split}
      \Xin&=\partial  H_{\rho,\varepsilon}\cap \{-\alpha^2+\beta^2=\varepsilon^2\}\subset g_0^{-1}(c-\varepsilon^2)\\
      \Xout&=\partial H_{\rho,\varepsilon}\cap \{-\alpha^2+\beta^2=-\varepsilon^2\}\subset g_0^{-1}(c+\varepsilon^2)\\
      \Xtan&=\partial H_{\rho,\varepsilon}\cap \{\alpha^2\beta^2=(\rho^4-\varepsilon^4)/4\}.\\
    \end{split}
  \end{equation}

  \begin{figure}
    \begin{tikzpicture}
      \fill[blue,opacity=0.2] (2.2,0) -- ++(0.3,0.3) .. controls ++ (-1.5,0) and ++ (0,-1.5) .. (0.3,2.5) -- (0,2.2) -- (0,0) -- (2.2,0);
      \draw[very thick, green] (2.2,0) -- ++ (0.3,0.3);
      \draw[very thick, blue] (0,2.2) -- ++ (0.3,0.3);
      \draw[very thick, purple] (2.5,0.3) .. controls ++ (-1.5,0) and ++(0,-1.5) .. (0.3,2.5);
      \draw[->] (-0.5,0) -- (3,0);
      \draw[->] (0,-0.5) -- (0,3);
      \draw (3,-0.2) node [scale=0.8] {$\alpha^2$};
      \draw (-0.2,3) node [scale=0.8] {$\beta^2$};
      \draw[green!50!black] (2.4,0.1) -- ++(0.5,0.5) -- node [above,scale=0.8,midway] {$\Xin$} ++(1,0);
      \draw[blue!50!black] (0.1,2.4) -- node [scale=0.8,above, near end] {$\Xout$} ++ (-1.3,0);
      \draw[purple!50!black] (1.1,1.1) node [scale=0.8] {$\Xtan$};
    \end{tikzpicture}
    \caption{A schematic presentation of $H_{\rho,\varepsilon}$.}\label{fig:u_rho}
  \end{figure}
  \newcommand{\Heps}[1]{H_{\varepsilon_{#1}}}
  It is clear that $\xip$ is tangent to $\Xtan$ (cf. \cite[Lemma 2.31]{morse-wboundary}).
  Choose sufficiently small values $\rho > \varepsilon_1>\varepsilon_2 > 0$.
  For brevity, we will write $\Heps{i} := H_{\rho, \varepsilon_i}$ and the corresponding subsets
  of $\partial \Heps{i}$ will be denoted by $\Xin^i,\Xout^i,\Xtan^i$, as in  Figure~\ref{fig:flow}.

  \begin{figure}
    \begin{tikzpicture}
      \fill[yellow,opacity=0.1] (-3,-2) -- (3,-2) -- (3,2) -- (-3,2) -- (-3,-2);
      \draw[thick] (-3,-2) -- (3,-2); \draw (3.5,-2) node [scale=0.8] {$f^{-1}(a)$};
      \draw[thick] (-3,2) -- (3,2); \draw (3.5,2) node [scale=0.8] {$f^{-1}(b)$};
      \fill[green,opacity=0.1] (-1.2,-0.6) -- (1.2,-0.6) .. controls ++(-0.3,0.3) and ++ (-0.3,-0.3) .. (1.2,0.6) -- (-1.2,0.6) .. controls ++(0.3,-0.3) and ++(0.3,0.3) .. (-1.2,-0.6);
      \fill[blue,opacity=0.3] (-0.5,-0.6) -- (0.5,-0.6) .. controls ++(-0.3,0.3) and ++ (-0.3,-0.3) .. (0.5,0.6) -- (-0.5,0.6) .. controls ++(0.3,-0.3) and ++(0.3,0.3) .. (-0.5,-0.6);
      \draw[very thin, dashed] (1.2,-0.6) .. controls ++ (-0.3,0.3) and ++(-0.3,-0.3) .. (1.2,0.6);
      \draw[very thin, dashed] (-1.2,-0.6) .. controls ++ (0.3,0.3) and ++(0.3,-0.3) .. (-1.2,0.6);
      \draw (-1.2,-0.6) -- (1.2,-0.6); \draw (1.6,-0.6) node [scale=0.8] {$\Xin^1$};
      \draw (-1.2,0.6) -- (1.2,0.6); \draw (1.6,0.6) node [scale=0.8] {$\Xout^1$};
      \fill[black] (0,0) circle(0.08);
      \draw(0,0.3) node [scale=0.8] {$z$};
      \draw[thin] (0,-0.4) -- ++(-0.4,-0.4) -- node[below, scale=0.8] {$\Heps{2}$} ++(-1,0);
      \draw[thin] (-0.9,0.4) -- ++ (-0.4,0.6) -- node[above, scale=0.8] {$\Heps{1}$} ++(-1,0);

    \end{tikzpicture}
    \caption{A schematic picture of various sets in Proposition~\ref{prop:integrate}. We assume that level sets of $f$ are horizontal, and the flow of $\xi'$ is directed upwards. The vector field $\xi'$ is tangent to the `vertical' boundaries of $H_{\varepsilon_1}$ and $H_{\varepsilon_2}$ and transverse to $\Xin^1$ and $\Xout^1$. Note that for simplicity, we ignore the change of topology of the level sets of $f$ while passing critical points.}\label{fig:flow}
  \end{figure}

  Rescale the vector field $\xip$ by a positive factor
  in such a way that:
  \begin{itemize}
    \item if the flow of $\xip$ does not hit $\Heps{1}$, then it takes precisely the time $b-a$ to get
          from $f^{-1}(a)$ to $f^{-1}(b)$;
    \item if the flow of $\xip$ starts from $f^{-1}(a)$ and hits $\Xin^1$, then it takes precisely time equal $c-\varepsilon-a$;
    \item if the flow of $\xip$ ends at $f^{-1}(b)$ and hits $\Xout^1$ in the past, then it takes precisely time equal
          $b-c-\varepsilon$;
    \item The time to reach $\Xout^1 \setminus \Xout^2$ from $\Xin^1 \setminus \Xin^2$ is equal to $2\varepsilon$.
  \end{itemize}

  \newcommand{\garg}{x}
  Choose a point $\garg \in f^{-1}(a,b)$ and let $\gamma_\garg$ be the trajectory of $\xip$ through $\garg$.
  We now define the function $g_1$ away from $\Heps{2}$ as follows:
  \begin{itemize}
    \item Suppose $\gamma_\garg$ travels from $f^{-1}(a)$ to $f^{-1}(b)$ without hitting $\Heps{2}$.
          Assume that $\gamma_\garg(0)\in f^{-1}(a)$, and so $\gamma_\garg(b-a)\in f^{-1}(b)$.
          We set $g_1(\garg)=a+t_\garg$, where $t_\garg$ is defined by $\gamma_\garg(t_\garg)=\garg$.
    \item Suppose $\gamma_\garg$ travels from $f^{-1}(a)$, passes through $\garg$, and then hits $\Xin^2$.
          Assume that $\gamma_\garg(0)\in f^{-1}(a)$.
          We set $g_1(\garg)=a+t_\garg$, where $t_\garg$ is defined by $\gamma_\garg(t_\garg)=\garg$.
    \item Suppose $\gamma_\garg$ travels from $\Xout^2$, passes through $\garg$, and then hits $f^{-1}(b)$.
          We assume $\gamma_\garg(0)\in \Xout^2$.
          We set $g_1(\garg)=c+\varepsilon+t_\garg$, where $t_\garg$ is defined by  $\gamma_\garg(t_\garg)=\garg$.
  \end{itemize}
  With this definition, we have defined $g_1(\garg)$ everywhere in $f^{-1}[a,b]$ except for the interior of $\Heps{2}$.
  For future reference, observe that the way the vector field $\xip$ leads to the following claim:
  \begin{lemma}\label{lem:for_future}
    The function $g_1$ is equal to $c-\varepsilon$ on the whole of $\Xin^1$, and to $c+\varepsilon$ on the whole of $\Xout^1$.
    Moreover, $f$ and $g_1$ coincide on $f^{-1}(b)$.
  \end{lemma}

  Ideally, we would like to set $g$ to be equal to $g_0$ on $\Heps{2}$ and to $g_1$ away from $\Heps{2}$.
  However, this could potentially introduce a discontinuity at the boundary.
  To avoid this, we choose a smooth cut-off function
  $\phi\colon \Xin^1\to[0,1]$, equal to $1$ on $\Xin^2$ and supported on a compact subset of $\Int \Xin^1$.
  Extend $\phi$ to the whole of $\Heps{1}$ demanding that $\phi$ be invariant under the flow of $\xip$.
  We set
  \begin{equation}\label{eq:def_of_g}
    g(w)=
    \begin{cases}
      g_1(w)                               & w\notin \Heps{1}, \\
      \left(\phi g_0+(1-\phi)g_1\right)(w) & w\in \Heps{1}.
    \end{cases}
  \end{equation}
  In particular $g = g_1$ outside $\Heps{1}$.
  We only need to check continuity at $\partial \Heps{1}$.
  By Lemma~\ref{lem:for_future}, $g$ is continuous at $\Xin^1$ and $\Xout^1$,
  the vanishing of $\phi$ on $\Xtan^1$ implies that $g$ is continuous at $\Xtan^1$, and so,
  $g$ is continuous everywhere.
  Moreover, after a little technical amendment, that is, after rescaling
  $\xip$, $g$ can be assumed smooth, cf. \cite[Proof of Proposition 2.35]{morse-wboundary}.
  By construction, $g=g_0$ in $\Heps{2}$,
  in particular, it is Morse.

  We claim that $\xip$ is gradient-like for $g$ constructed in such a way.
  Since $g$ and $\xi'$ admit the Morse normal form near $z$ (see \eqref{eq:morse_normal}),
  we only need to check that $\partial_{\xip}g \ge 0$ on $f^{-1}[a,b]$, with the sole equality at $z$.
  It is evident on $\Heps{2}$, as $g=g_0$.
  Note that, by construction, $\partial_{\xip}g_1\equiv 1$ on $f^{-1}[a,b]\setminus \Heps{2}$.
  In particular, $\partial_{\xip}g > 0$ away from $\Heps{1}$. Suppose $w\in \Heps{1}\setminus \Heps{2}$.
  The function $\phi$ was to be $\xip$-invariant, hence $\partial_{\xip}\phi=0$. By \eqref{eq:def_of_g}:
  \[
    \partial_{\xip}g = \partial_{\xip}(\phi g_0+(1-\phi)g_1) = \phi\partial_{\xip}g_0+(1-\phi)\partial_{\xip}g_1.
  \]
  As $\partial_{\xip}g_0(\garg)>0$ on $\Heps{1}\setminus \Heps{2}$, and $\partial_{\xip}g_1\equiv 1$,
  we conclude that $\partial_{\xip}g>0$ on $\Heps{1}\setminus \Heps{2}$.

  In this way, we finish the proof of Proposition~\ref{prop:integrate}, which was the last step in the proof of Theorem~\ref{thm:main}.
\end{proof}

\bibliographystyle{amsalpha}
\def\MR#1{}
\bibliography{morse}

\end{document}